\documentclass[leqno,12pt]{amsart} 
\setlength{\textheight}{23cm}
\setlength{\textwidth}{16cm}
\setlength{\oddsidemargin}{0cm}
\setlength{\evensidemargin}{0cm}
\setlength{\topmargin}{0cm}
\usepackage{amssymb}
\usepackage{amsmath}
\usepackage{amsthm}
\usepackage{amscd}
\usepackage{graphicx}
\usepackage[all]{xy}
\usepackage[dvips]{color}
%
%
%
\theoremstyle{plain} 
\newtheorem{theorem}{\indent\sc Theorem}[section]
\newtheorem{lemma}[theorem]{\indent\sc Lemma}

\newtheorem{proposition}[theorem]{\indent\sc Proposition}

\theoremstyle{definition} 

\newtheorem{remark}[theorem]{\indent\sc Remark}
\newtheorem{example}[theorem]{\indent\sc Example}

\newtheorem{question}[theorem]{\indent\sc Question}
%

%


\begin{document}

\title[Arnol'd's type theorem on a neighborhood of a cycle of rational curves]
{Arnol'd's type theorem on a neighborhood of a cycle of rational curves} 

\author[T. Koike]{Takayuki Koike} 

\subjclass[2010]{ 
32J15; 14J28. 
}
%
\keywords{ 
Arnol'd's theorem on a neighborhood of an elliptic curve, Diophantine condition, Cycles of rational curves, Gluing construction of K3 surfaces. 
}
\address{
Department of Mathematics, Graduate School of Science, Osaka City University \endgraf
3-3-138, Sugimoto, Sumiyoshi-ku Osaka 558-8585 \endgraf
Japan
}
\email{tkoike@sci.osaka-cu.ac.jp}

\maketitle

\begin{abstract}
Arnol'd showed the uniqueness of the complex analytic structure of a small neighborhood of a non-singular elliptic curve embedded in a non-singular surface whose normal bundle satisfies Diophantine condition in the Picard variety. 
We show an analogue of this theorem for a neighborhood of a cycle of rational curves. 
\end{abstract}

\section{Introduction}

Let $C$ be a cycle of rational curves (i.e. a reduced singular complex curve with only nodes such that the dual graph is a cycle graph and each complement of the normalization is biholomorphic to the projective line $\mathbb{P}^1$) holomorphically embedded in a non-singular complex surface $S$. 
Assume that the normal line bundle $N_{C/S}:=[C]|_C$ is topologically trivial, where $[C]$ is the holomorphic line bundle on $S$ defined by the divisor $C$. 
Denote by $t(N_{C/S})\in\mathbb{C}^*$ the complex number which corresponds to $N_{C/S}$ via the natural identification of Picard variety ${\rm Pic}^0(C)$ of $C$ with $H^1(C, \mathbb{C}^*) = \mathbb{C}^*$, where $\mathbb{C}^*=\mathbb{C}\setminus\{0\}$ (\cite[Lemma 1]{U91}, see also \S \ref{section:Pic}). 
We show the following theorem on the uniqueness of the complex analytic structure of a small neighborhood of $C$ under a Diophantine type condition for the normal bundle. 

\begin{theorem}\label{thm:arnold_for_nodal_rational}
Let $C$ be a cycle of rational curves, 
and $i\colon C\to S$ and $i'\colon C\to S'$ be holomorphic embeddings into non-singular complex surfaces $S$ and $S'$ respectively. 
Assume that $t(N_{i(C)/S})=t(N_{i'(C)/S'})=e^{2\pi\sqrt{-1}\theta}$ holds for a Diophantine irrational number $\theta\in\mathbb{R}$ (i.e. there exist positive constants $\alpha$ and $A$ such that $|n\cdot \theta-m|\geq A\cdot n^{-\alpha}$ holds for any integer $m$ and any positive integer $n$).  
Then there exists a biholomorphism $f\colon V\to V'$ between a neighborhood $V$ of $i(C)$ in $S$ and $V'$ of $i'(C)$ in $S'$ with $f|_{i(C)}=i'\circ i^{-1}$. 
\end{theorem}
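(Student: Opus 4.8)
The plan is to construct the biholomorphism $f$ directly by a rapidly converging approximation scheme: I would use that $i(C)$ and $i'(C)$ carry the same normal bundle to match the two neighborhoods order by order along $C$, and the Diophantine condition to control the small divisors that appear in the process. First I would fix a common finite covering of $C$ adapted to its nodal structure, consisting of ``edge'' charts $U_\lambda$ on which $C$ is smooth and presented as $\{w_\lambda=0\}\subset\mathbb{C}^2$, together with ``vertex'' charts $V_j$ around the $j$-th node on which $C$ is presented as $\{x_jy_j=0\}$. On each chart both $S$ and $S'$ appear as neighborhoods of $C$ in $\mathbb{C}^2$. The hypothesis $t(N_{i(C)/S})=t(N_{i'(C)/S'})=e^{2\pi\sqrt{-1}\theta}$ means exactly that $N_{i(C)/S}$ and $N_{i'(C)/S'}$ are isomorphic as holomorphic line bundles (same point of ${\rm Pic}^0(C)\cong\mathbb{C}^\ast$, cf.\ \cite{U91}), so the fiber coordinates can be normalized to have the same monodromy $e^{2\pi\sqrt{-1}\theta}$ around the cycle. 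This provides a biholomorphism $f_0$ on a neighborhood of $i(C)$ carrying it to $i'(C)$ and agreeing with $i'\circ i^{-1}$ to first order in the normal direction.

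Next I would set up the inductive correction. Suppose $f_k$ is a biholomorphism near $i(C)$ agreeing with $i'\circ i^{-1}$ to order $m$ along $C$, meaning that in the fiber coordinate the discrepancy $f_k^\ast(\text{fiber coord.\ of }S')-(\text{fiber coord.\ of }S)$ vanishes to order $m$. The order-$m$ part of this discrepancy is a cocycle with values in $N_{i(C)/S}^{\otimes m}$, and the obstruction to removing it by a further coordinate change is a class in $H^1(C,N_{i(C)/S}^{\otimes m})$. Here the key algebraic input is that a cycle of rational curves has arithmetic genus one, so Riemann--Roch gives $\chi(N^{\otimes m})=\deg N^{\otimes m}+1-p_a=0$; and since $\theta$ is irrational, $N^{\otimes m}$ corresponds to $e^{2\pi\sqrt{-1}m\theta}\neq 1$, whence $H^0(C,N^{\otimes m})=H^1(C,N^{\otimes m})=0$ for every $m\geq 1$. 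Thus the cohomological equation is always solvable (and its solution unique), so the correction $f_{k+1}$ exists at every order; formally $i$ and $i'$ define equivalent neighborhoods to all orders.

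The quantitative heart of the argument is the size of the solution operator. Solving the cohomological equation is a Cousin-type problem on the chain of $\mathbb{P}^1$'s, which is harmless and bounded independently of $m$ on each component; but closing up the local solutions around the cycle forces a division by $e^{2\pi\sqrt{-1}m\theta}-1$, exactly the $\mathbb{C}^\ast$-analogue of the small divisors in Arnol'd's elliptic case. Hence the order-$m$ correction obeys $\|f_{k+1}-f_k\|\lesssim |e^{2\pi\sqrt{-1}m\theta}-1|^{-1}\cdot\|(\text{order-}m\text{ discrepancy})\|$. The Diophantine hypothesis enters precisely here: writing $m'$ for the integer nearest to $m\theta$, one has $|e^{2\pi\sqrt{-1}m\theta}-1|=2|\sin\pi(m\theta-m')|\geq 4|m\theta-m'|\geq 4A\,m^{-\alpha}$, so the small divisors grow only polynomially in $m$. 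I would therefore run the correction not term by term but as a Newton-type (Nash--Moser / KAM) iteration that roughly doubles the matched order at each step, so that the quadratic gain in the order of contact dominates both the polynomial small-divisor loss and the polynomial loss from the Cauchy estimates incurred when shrinking the neighborhood at each stage.

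Finally I would prove convergence: with a geometrically shrinking sequence $V_0\supset V_1\supset\cdots$ of neighborhoods of $i(C)$ and majorant estimates controlling each step, the composite $f=\lim_k f_k$ would converge uniformly on $V=\bigcap_k V_k$ to a biholomorphism onto a neighborhood $V'$ of $i'(C)$, with $f|_{i(C)}=i'\circ i^{-1}$ by construction. The hard part will be exactly this convergence: I must choose the function spaces and the explicit solution of the cohomological equation on the \emph{nodal} curve so that the operator bound is genuinely $O(m^{\alpha})$, handling the two branches meeting at each node without additional loss, and then carefully balance the doubly-exponential gain from quadratic convergence against the accumulated small-divisor and domain-shrinkage factors. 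This is the standard KAM difficulty, transported from Arnol'd's smooth elliptic setting to the degenerate situation ${\rm Pic}^0(C)\cong\mathbb{C}^\ast$, and it is the nodal geometry that demands the most care.
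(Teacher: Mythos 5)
Your formal layer is sound and coincides with the paper's: on a cycle of rational curves (arithmetic genus one) a nontorsion flat bundle has $H^0=H^1=0$, so the obstruction at every order vanishes and the solution of each cohomological equation is unique; and the small divisors are exactly $|1-e^{2\pi\sqrt{-1}m\theta}|^{-1}=O(m^{\alpha})$ by the Diophantine hypothesis. But from there your route diverges from the paper's, and at its decisive step it has a genuine gap. The paper does not run a Newton/KAM iteration on the nonlinear conjugacy at all. It first invokes \cite[Theorem 1.4]{K6} (which already consumes the Diophantine hypothesis) to obtain defining functions $w_j$ with \emph{unitary constant} transitions $w_j=t_{jk}w_k$; then, via Grauert's embedding of the covering $\widetilde V$ into $\mathcal{O}_{\mathbb{P}^1}(-2)$ and Ohsawa's vanishing theorem to produce the fibration $p$, the whole discrepancy with the standard model is compressed into one scalar function $G$, while $F^*\widetilde w=t\cdot\widetilde w$ holds \emph{exactly}. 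Consequently the nonlinear problem linearizes completely: it suffices to kill the single class $\alpha=[\{(V^+,-g),(V^-,0)\}]$ in $H^1$ of a smaller neighborhood (Proposition \ref{prop:H^1_main_general}), and this linear Cousin problem is solved once and for all by a power-series ansatz majorized by the Siegel series $B(X)$ of \eqref{eq:func_eq_B} --- no composition of infinitely many maps, no quadratic scheme, no shrinking-domain bookkeeping beyond shrinking $\delta$ below the radius of convergence. Your plan, by contrast, attacks the nonlinear problem head-on and therefore must do strictly more analysis than the theorem requires.

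The concrete gaps are these. First, you assert that solving the twisted equation on the chain of $\mathbb{P}^1$'s is ``harmless and bounded independently of $m$'' and that the node causes ``no additional loss,'' but this is precisely the content of Lemma \ref{lem:ueda_type_estim}: the bound $\max_j\sup|a_j|\le K|1-t^m|^{-1}\max\sup|b_\pm|$ with $K$ independent of $m$ is proved by differentiating the cocycle, applying the Kodaira--Spencer estimate \cite[Lemma 2]{KS} to the resulting $1$-forms, integrating back from the node, and solving explicitly for the constants $r$ and $C_1$ (whose linear system has determinant $t_+^{-m}-t_-^{-m}$, i.e.\ the small divisor enters only once per order). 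Without this lemma your operator bound $O(m^{\alpha})$ is unsupported. Second, the uniform-radius Cauchy estimates you need at every order require coordinates adapted to the node --- the projection $(z,w_1)\mapsto z$ must agree with $(x,y)\mapsto x$ on $V^*_+$ and with $(x,y)\mapsto y$ on $V^*_-$ (Remark \ref{rmk:R}); this comes from the Grauert/Ohsawa normal form, not from an arbitrary atlas as in your setup. Third, your convergence argument is announced rather than executed (``the hard part will be exactly this convergence''), and for a blind attempt that \emph{is} the theorem; note also that a KAM limit taken on $V=\bigcap_k V_k$ must be shown to live on a genuine neighborhood of $C$. Finally, for $n\ge 2$ components the paper first kills $G_{\nu+1,\nu}$ for all but one node using the negative-definite tree $\widetilde C'$ and the injectivity of $H^1(\widetilde V',\mathcal{O}_{\widetilde V'})\to H^1(\widetilde C',\mathcal{O}_{\widetilde C'})=0$ (\S \ref{section:prf_general}), so that the small-divisor problem appears only at the single node closing the cycle; your scheme does not distinguish that node from the others, which would entangle the estimates across all vertices simultaneously.
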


Theorem \ref{thm:arnold_for_nodal_rational} can be regarded as an analogue of Arnol'd's theorem \cite{A}, which states that the conclusion of the theorem holds for a non-singular elliptic $C$ embedded in a non-singular surface $S$ under the assumption that $N_{C/S}$ satisfies the Diophantine type condition in the Picard variety. 

Note that, in our notation, $C$ is a cycle of rational curves with only one irreducible component when $C$ is a rational curve with a node. 
Neighborhoods of a rational curve with a node embedded in a surface was first investigated by Ueda in \cite{U91} when $t(N_{C/S})\in \mathbb{C}^*\setminus {\rm U}(1)$, where ${\rm U}(1):=\{t\in\mathbb{C}^*\mid |t|=1\}$. 
In \cite{K6}, we slightly generalized his results to the case where, for example, $C$ is a cycle of rational curves \cite[Theorem 1.6]{K6}. 
In that paper, we also treated the case where $t(N_{C/S})\in {\rm U}(1)$, which is the case that $N_{C/S}$ is a ${\rm U}(1)$-flat line bundle: i.e. $N_{C/S}$ admits a $C^\omega$ Hermitian metric with flat curvature. 
In this case, we showed the existence of a pseudoflat neighborhoods system of $C$ under the assumption that $t(N_{C/S})=e^{2\pi\sqrt{-1}\theta}$ holds for a Diophantine irrational number $\theta\in\mathbb{R}$ \cite[Theorem 1.4]{K6}, which can be regarded as an analogue of Ueda's theorem for a non-singular compact curve embedded in a surface \cite[Theorem 3]{U83}. 
Here we remark that Theorem \ref{thm:arnold_for_nodal_rational} is also regarded as an improved version of \cite[Theorem 1.4]{K6}. 

In the proof of Theorem \ref{thm:arnold_for_nodal_rational}, we compare the complex structure of a small neighborhood $V$ of $C$ with that of {\it the standard model} we describe in Example \ref{ex:standard_model} or \ref{ex:standard_model_general}. 
We consider the cohomology class $\alpha=\alpha(C, V)\in H^1(V, \mathcal{O}_V)$ which corresponds to the difference of them. 
Then one can see that it is sufficient to show that the restriction $\alpha|_{V^*}$ of this class to a small neighborhood $V^*$ of $C$ in $V$ is equal to zero in $H^1(V^*, \mathcal{O}_{V^*})$. 
Note that this class satisfies $\alpha|_C=0\in H^1(C, \mathcal{O}_C)$. 
Therefore the problem is reduced to showing the injectivity of the restriction morphism $\lim_{V^*\to} H^1(V^*, \mathcal{O}_{V^*})\to H^1(C, \mathcal{O}_C)$, where $V^*$ runs all the neighborhoods of $C$ in $V$ (Proposition \ref{prop:H^1_main_general}). 
We show this by using a complex dynamical technique originated from \cite{S}, which is also used in the proofs of \cite[Theorem 3]{U83} and \cite[Theorem 1.4]{K6}. 

The main motivation of the present paper comes from \cite{T} and \cite{KK3}. 
In \cite{KK3}, as an application of Arnol'd's theorem \cite{A}, we constructed a K3 surface by holomorphically gluing two open complex surfaces obtained as the complements of tubular neighborhoods of non-singular elliptic curves embedded in the blow-ups of the projective planes at appropriate nine points. 
As described in \cite[\S 4.1.1]{KK3}, this construction can be regarded as a concrete description of a general fiber of a degeneration of K3 surfaces of type II. 
Theorem \ref{thm:arnold_for_nodal_rational} can also be applied to nodal curves embedded in the blow-ups of the projective planes at appropriate nine points (Examples \ref{ex:ABU}, \ref{ex:ABU_2}, and \ref{ex:ABU_3}). 
Toward a concrete description of a general fiber of a degeneration of K3 surfaces of type III, we will investigate these examples precisely. 

The organization of the paper is as follows. 
In \S 2, we correct some fundamental facts on cycles of rational curves. 
Here we also fix coordinates on a neighborhood of each irreducible component of a cycle of rational curves by using Grauert's theorem \cite{G} intrinsically. 
In \S 3, we show the injectivity of the morphism $\lim_{V^*\to} H^1(V^*, \mathcal{O}_{V^*})\to H^1(C, \mathcal{O}_C)$, where $V^*$ runs all the neighborhoods of $C$. 
In \S 4, we prove Theorem \ref{thm:arnold_for_nodal_rational}. 
In \S 5, we investigate Examples \ref{ex:ABU}, \ref{ex:ABU_2}, and \ref{ex:ABU_3} precisely. 
\vskip3mm
{\bf Acknowledgment. } 
The author would like to give heartful thanks to Prof. Tetsuo Ueda whose enormous support and insightful comments were invaluable. 
He thanks Dr. Takahiro Matsushita and Dr. Yuta Nozaki  who gave him many valuable comments on the topological aspects of Levi-flat hypersurfaces which is treated in \S 5. 
He is supported by Leading Initiative for Excellent Young Researchers (No. J171000201).


\section{Preliminaries}

In this section, we collect some fundamental facts and fix some notation on a cycle of rational curves. 

Let $C$ be a cycle of rational curves embedded in a non-singular surface $S$ with Diophantine condition as in Theorem \ref{thm:arnold_for_nodal_rational}. 
Take an open covering $\{U_j\}_j$ of $C$, 
and a small neighborhood $V_j$ of $U_j$ in $S$ with $V_j\cap C=U_j$ for each $j$. 
Denote by $V$ the neighborhood $\bigcup_jV_j$ of $C$. 

It follows from \cite[Proposition 2.5 (2)]{K6} that the pair $(C, S)$ is of infinite type in the sense of \cite{K6}. 
Therefore, from \cite[Theorem 1.4]{K6}, we have that there exists a defining function $w_j$ of $U_j$ in $V_j$ for each $j$ such that $w_j=t_{jk}w_k$ holds for some $t_{jk}\in {\rm U}(1)$ on each $V_{jk}:=V_j\cap V_k$ when $\{U_j\}$ and $\{V_j\}$ are sufficiently fine. 

\subsection{Preliminaries on a rational curve with a node}
\subsubsection{Notation}\label{section:prelim_notation}
Let $C$ be a rational curve with a node. 
In this case, we choose open coverings $\{U_j\}_j$ and $\{V_j\}_j$ such that the index set is $\{0, 1\}$ as follows: 
Let $U_0$ be a small neighborhood of the nodal point of $C$ 
and $U_1$ be the regular part $C_{\rm reg}:=C\setminus \{\text{nodal point}\}$ of $C$. 
By taking $V_j$ as a sufficiently small neighborhood of $U_j$, we may assume that $V_0\cap V_1$ consists of two connected component $V^+$ and $V^-$. 
Let $t_{\pm}$ be elements of ${\rm U}(1)$ such that 
\[
w_1= \begin{cases}
    t_+\cdot w_0 & (\text{on}\ V^+) \\
    t_-\cdot w_0 & (\text{on}\ V^-). 
  \end{cases}
\]
Note that $t:=t_+/t_-=t(N_{C/S})\in {\rm U}(1)\subset \mathbb{C}^*=H^1(C, \mathbb{C}^*)$, see \S \ref{section:Pic}. 

Let $z$ be a non-homogeneous coordinate of the normalization $\widetilde{C}\cong \mathbb{P}^1$ of $C$ such that the preimage of the nodal point is $\{0, \infty\}$. 
As we will see in \S \ref{section:V_tilde}, we can extend the function $z|_{U_1}$ to $V_1$, where we are identifying $\widetilde{C}\setminus\{0, \infty\}$ with $U_1$ (see also \cite{Siu}). 
The resulting holomorphic function on $V_1$ is also denoted by $z$. 
Take coordinates $(x, y)$ of $V_0$ such that $x\cdot y$ is a defining function of $U_0$ in $V_0$. 
These functions $(x, y)$ will also be chosen by more careful argument in \S \ref{section:V_tilde} in actual. 
Denote by $U^+_0$ the subset $\{(x, y)\in V_0\mid y=0\}$ and 
$U^-_0$ the subset $\{(x, y)\in V_0\mid x=0\}$. 
We may assume that $U^+:=V^+\cap U_0$ coincides with $U^+_0\setminus\{\text{nodal point}\}$, and that $U^-:=V^-\cap U_0$ coincides with $U^-_0\setminus\{\text{nodal point}\}$. 

\subsubsection{Picard variety and some cohomologies}\label{section:Pic}

Let $L\in {\rm Pic}^0(C)$ be a topologically trivial line bundle on $C$. 
Then there is a uniquely determined complex constant $t=t(L)\in \mathbb{C}^*$ with 
\[
L=[\{(U^+, t), (U^-, 1)\}]\in \check{H}^1(\{U_j\}, \mathcal{O}_C^*)
=H^1(C, \mathcal{O}_C^*)
\]
where we are using the notation in the previous section (see the arguments around \cite[Lemma 1]{U91}). 
In particular, it is observed that $L$ admits $\mathbb{C}^*$-flat structure: i.e. $L$ admits a flat connection. 
From this fact, one have that ${\rm Pic}^0(C)$ is naturally identified with $H^1(C, \mathbb{C}^*)=\mathbb{C}^*$. 

When $t(L)\in {\rm U}(1)\setminus \{1\}$, $L$ is a non-trivial ${\rm U}(1)$-flat line bundle. 
In this case, one can obtain by considering the long exact sequence comes from $0\to \mathcal{O}_C(L)\to i_*\mathcal{O}_{\widetilde{C}}(i^*L)\to \mathcal{O}_{\{\text{the nodal point}\}}\to 0$ that $H^0(C, L)=H^1(C, L)=0$, where $i\colon\widetilde{C}\to C$ is the normalization 
(see \cite[p. 852]{K6}). 
By the same argument, one also have that $H^0(C, \mathcal{O}_C)\cong H^1(C, \mathcal{O}_C)\cong\mathbb{C}$. 


\subsubsection{Standard model of a neighborhood of a rational curve with a node and some examples}

The following example can be regarded as the standard model of a neighborhood of a rational curve with a node. 

\begin{example}\label{ex:standard_model}
Let $\widetilde{V}$ be a neighborhood of the zero section $\widetilde{C}$ of the line bundle $\pi\colon\mathcal{O}_{\mathbb{P}^1}(-2)\to \mathbb{P}^1$. 
Let $S$ be a non-homogeneous coordinate of $\mathbb{P}^1$. 
We also use the non-homogeneous $T:=S^{-1}$ especially when we observe a neighborhood of $\{S=\infty\}$. 
Let $\xi_0$ and $\xi_\infty$ be fiber coordinates of $\mathcal{O}_{\mathbb{P}^1}(-2)$ defined in a neighborhood of $\{S=0\}$ and $\{T=0\}$, respectively. 
We may assume that $\xi_\infty=\xi_0\cdot S^2$ holds in the fibers over $\mathbb{P}^1\setminus\{S=0, \infty\}$. 

Fix a constant $0<\varepsilon <1$ and let us consider subsets 
\[
\widetilde{V}_0^+:=\{(S, \xi_0)\in \pi^{-1}(\mathbb{C})\mid |S|<\varepsilon,\ |\xi_0|<\varepsilon\}
\]
and 
\[
\widetilde{V}_0^-:=\{(T, \xi_\infty)\in \pi^{-1}(\mathbb{P}^1\setminus\{0\})\mid |T|<\varepsilon,\ |\xi_\infty|<\varepsilon\}
\]
of $\widetilde{V}$. 
By shrinking $\widetilde{V}$ if necessary, we may assume that 
$\pi^{-1}(\{|S|<\varepsilon\})\cap \widetilde{V}=\widetilde{V}_0^+$ and 
$\pi^{-1}(\{|T|<\varepsilon\})\cap \widetilde{V}=\widetilde{V}_0^-$. 
Define a biholmorphism $F\colon \widetilde{V}_0^+\to \widetilde{V}_0^-$ by $F^*(T, \xi_\infty):=(t\cdot \xi_0, S)$ 
(i.e. $F^*T:=T\circ F:=t\cdot \xi_0$ and $F^*\xi_\infty:=\xi_\infty\circ F:=S$), where $t\in {\rm U}(1)$ is a constant. 
Denote by $i\colon \widetilde{V}\to V$ the quotient by the relation induced by $F$. 
Then $V$ is a non-singular surface and the compact analytic subset $C:=i(\widetilde{C})$ is a rational curve with a node such that $t(N_{C/S})=t$. 
\end{example}

Next example is an analogue of Arnol'd--Ueda--Brunella's example \cite{A} \cite{U83} \cite{B}. 

\begin{example}\label{ex:ABU}
Take a plane cubic $C_0\subset \mathbb{P}^2$ which admits only one nodal point, 
and nine points $Z\subset \{p_1, p_2, \dots, p_9\}\subset (C_0)_{\rm reg}$, where $(C_0)_{\rm reg}$ is the non-singular locus of $C_0$. 
Denote by $\pi\colon S\to \mathbb{P}^2$ the blow-up at $Z$ and by $C$ the strict transform $(\pi^{-1})_*C_0$. 
Then it is known that, by taking a normalization $i\colon \mathbb{P}^1\to C_0$ with $i^{-1}((C_0)_{\rm sing})=\{0, \infty\}$ appropriately ($(C_0)_{\rm sing}:=C_0\setminus (C_0)_{\rm reg}$), the complex constant $t=t(N_{C/S})\in\mathbb{C}^*$ can be calculated by 
$t = \prod_{\nu=1}^9i^{-1}(p_\nu) \in \mathbb{C}^* = H^1(C_0, \mathbb{C}^*)$, 
where we are identifying $C_0$ and $C$ via $\pi$. 
Especially, each point of ${\rm Pic}^0(C_0)$ is attained by choosing appropriate nine points configuration $Z$. 
\end{example}

Finally, we give a counter example of Theorem \ref{thm:arnold_for_nodal_rational} when $N_{C/S}$ does not satisfy Diophantine condition. 

\begin{example}
Let $\{(\widetilde{V}, \widetilde{V}_0^\pm, S, T, \xi_0, \xi_\infty)\}$ be those in Example \ref{ex:standard_model}. 
Denote by $\widetilde{W}_0^+$ the subset $\{(S, \xi_0)\in \pi^{-1}(\mathbb{C})\cap\widetilde{V}\mid |S|<1\}$ and 
by $\widetilde{W}_0^-$ the subset $\{(T, \xi_\infty)\in \pi^{-1}(\mathbb{P}^1\setminus\{0\})\cap\widetilde{V}\mid |T|<1\}$ of $\mathcal{O}_{\mathbb{P}^1}(-2)$. 
Note that $\widetilde{V}_0^+\subset \widetilde{W}_0^+$ and $\widetilde{V}_0^-\subset \widetilde{W}_0^-$. 
For sufficiently small positive constant $\delta$, set 
$\widetilde{W}_1:=\{(S, \xi_0)\in \widetilde{V}\mid 1/2<|S|<2,\ |\xi_0|<\delta\}$. 
We may assume that 
$\widetilde{V}_0^+\cap \widetilde{W}_1=\emptyset$ and $\widetilde{V}_0^-\cap \widetilde{W}_1=\emptyset$ hold. 
Take a univalent holomorphic function $\varphi$ defined on $\{w\in\mathbb{C}\mid |w|<\delta\}$ such that $\varphi(0)=0$ and $\lambda:=\varphi'(0)\in{\rm U}(1)$ hold. 
Denote by $\Phi_+\colon\widetilde{W}_1\cap \widetilde{W}_0^+\to \widetilde{W}_0^+$ the map defined by $(\Phi_+)^*(S, \xi_0)=(S, \varphi(S\cdot \xi_0)\cdot S^{-1})$ and by $\Phi_-\colon\widetilde{W}_1\cap \widetilde{W}_0^-\to \widetilde{W}_0^-$ the natural injection. 
Define a surface $W$ by $W:=(\widetilde{W}_0^+\amalg \widetilde{W}_1\amalg \widetilde{W}_0^-)/\sim$, where $\sim$ is the relation generated by 
\[
\begin{cases}
\widetilde{W}_0^+\ni p\sim F(p) \in \widetilde{W}_0^- & \text{if}\ p\in\widetilde{V}_0^+\\
\widetilde{W}_1\ni p\sim \Phi_+(p) \in \widetilde{W}_0^+ & \text{if}\ p\in\left\{(S, \xi_0)\in \widetilde{W}_1\left| \frac{1}{2}<|S|<1\right.\right\}\\
\widetilde{W}_1\ni p\sim \Phi_-(p)\in \widetilde{W}_0^-& \text{if}\ p\in\{(S, \xi_0)\in \widetilde{W}_1\mid 1<|S|<2\}, 
\end{cases}
\]
where $F$ is the one in Example \ref{ex:standard_model} with $t=1$. 
Denote by $C$ the image of $\widetilde{C}$ by the quotient map. 
Note that $C$ is a compact leaf of the holomorphic foliation $\mathcal{F}$ on $W$ whose leaves are defined by 
\[
\begin{cases}
\{S\cdot \xi_0=\text{constant}\} & (\text{on}\ \widetilde{W}_0^+) \\
\{S\cdot \xi_0=\text{constant}\} & (\text{on}\ \widetilde{W}_1) \\
\{T\cdot \xi_\infty=\text{constant}\} & (\text{on}\ \widetilde{W}_0^-). 
\end{cases}
\]
Assume that $\varphi$ is the one as in \cite[p. 606]{U83}. 
Then $t(N_{C/S})=\varphi'(0)$ is a non-torsion element of ${\rm U}(1)$, and 
any small neighborhood $W^*\subset W$ of $C$ includes a compact leaf of $\mathcal{F}$ which is biholomorphic to an elliptic curve and has no intersection with $C$. 
As it follows from the same argument as in \cite[\S 5.3]{U83} that there is no compact subvariery $W^*\setminus C$ for sufficiently small $W^*$ if $C$ admits pseudoflat neighborhoods system, we have that $C$ does not admit a neighborhood as in Example \ref{ex:standard_model} in this example. 
%
\end{example}

\subsection{Definition of the covering map $\widetilde{V}\to V$ and outline of the proof of Theorem \ref{thm:arnold_for_nodal_rational}}\label{section:V_tilde}
Here we use the notation in \S \ref{section:prelim_notation}. 
Take a copy $\widetilde{V}_1$ of $V_1$ and two copies $\widetilde{V}_0^+$ and $\widetilde{V}_0^-$ of $V_0$. 
Denote by $\widetilde{V}$ the manifold constructed by patching $\widetilde{V}_0^+$, $\widetilde{V}_1$ and $\widetilde{V}_0^-$ by considering the natural injections
\[
\xymatrix{
\widetilde{V}_0^+& &\widetilde{V}_1 & & \widetilde{V}_0^- \\
& V^+\ar[ur]\ar[ul]& & V^-\ar[ur]\ar[ul] &\\
}
\]
of $V^\pm$. 
Note that $\widetilde{V}$ can be regarded as an open submanifold of the universal covering of $V$. 
Denote by $i\colon \widetilde{V}\to V$ the natural map. 
In what follows, we regard $\widetilde{V}^\pm_0$ and $\widetilde{V}_1$ as subsets of $\widetilde{V}$. 
Then $i|_{\widetilde{C}}$ is a normalization of $C$, where $\widetilde{C}\subset i^{-1}(C)$ is the irreducible component which is compact. 
By identifying $\widetilde{C}$ and $\mathbb{P}^1$, we may assume that the preimage of the nodal point is $\{0, \infty\}$. 
Denote by $D_0$ and $D_\infty$ the other two irreducible components of $i^{-1}(C)$ which intersects $\widetilde{C}$ at $0$ and $\infty$, respectively. 
Define the defining function $\widetilde{w}$ of the divisor $i^*C=\widetilde{C}+D_0+D_\infty$ of $\widetilde{V}$ by 
\[
\widetilde{w}:=
\begin{cases}
     (\widetilde{V}_0^+\to V_0)^*(t_+\cdot w_0) & (\text{on}\ \widetilde{V}_0^+) \\
     (\widetilde{V}_1\to V_1)^*w_1 & (\text{on}\ \widetilde{V}_1) \\
     (\widetilde{V}_0^+\to V_0)^*(t_-\cdot w_0) & (\text{on}\ \widetilde{V}_0^-), 
  \end{cases}
\]
where $\widetilde{V}_0^+\to V_0$, $\widetilde{V}_1\to V_1$ and $\widetilde{V}_0^+\to V_0$ be the natural biholomorphisms. 
By a simple argument, we have that ${\rm deg}\,N_{\widetilde{C}/\widetilde{V}}=-2$. 
Therefore, it follows from Grauert's theorem \cite{G} that $\widetilde{V}$ can be holomorphically embedded in the total space of the line bundle $\mathcal{O}_{\mathbb{P}^1}(-2)\to \mathbb{P}^1$ by shrinking $\widetilde{V}$ (see also \cite[Theorem 2.5.2]{CM}). 
In what follows, we regard $\widetilde{V}$ as a subset of $\mathcal{O}_{\mathbb{P}^1}(-2)$ and identify $\widetilde{C}$ with the zero section via this embedding. 

Take a strictly pseudoconvex neighborhood $\mathcal{V}$ of $\widetilde{C}$ in $\widetilde{V}$. 
It follows from Ohsawa's vanishing theorem \cite[Theorem 4.5]{O} that $H^1(\mathcal{V}, \mathcal{O}_{\mathcal{V}})=0$. 
Thus we have that the line bundle on $\widetilde{V}$ corresponds to the divisor $D_0-D_\infty$ is holomorphically trivial. 
Therefore there exists a holomorphic map $p\colon \mathcal{V}\to \mathbb{P}^1$ such that 
$p^*(\{0\}-\{\infty\})=D_0-D_\infty$ holds as divisors. 
By shrinking $\widetilde{V}$ so that $\widetilde{V}\subset \mathcal{V}$, we may assume that the map $p$ is defined on $\widetilde{V}$. 

Let $S=T^{-1}$ be non-homogeneous coordinate of $\mathbb{P}^1$. 
Denote also by $S$ and $T$ the meromorphic functions $p^*S$ and $p^*T$ on $\widetilde{V}$, respectively. 
Then we have that $D_0=\{S=0\}=\{T=\infty\}$ and $D_\infty=\{S=\infty\}=\{T=0\}$. 
Setting $\xi_0:=\widetilde{w}\cdot S^{-1}$ on a neighborhood of $D_0$ and 
$\xi_\infty:=\widetilde{w}\cdot T^{-1}$ on a neighborhood of $D_\infty$, 
we regard $(S, \xi_0)$ and $(T, \xi_\infty)$ as coordinates 
of a neighborhood of $D_0$ and $D_\infty$, respectively. 
Denote by $F\colon \widetilde{V}^+_0\to \widetilde{V}^-_0$ the biholomorphism such that $i^{-1}(i(p))=\{p, F(p)\}$ for each $p\in \widetilde{V}^+_0$. 
As it hold that $F^*\widetilde{w}=t\cdot \widetilde{w}$, 
$F^*D_\infty=\widetilde{C}\cap \widetilde{V}^+_0$ and 
that $F^*(\widetilde{C}\cap \widetilde{V}^-_0)=D_0$, 
we have that 
\[
F^*(T, \xi_\infty)=
\left(\frac{t\cdot \xi_0}{G(S, \xi_0)},\ G(S, \xi_0)\cdot S\right)
\]
holds for a nowhere vanishing holomorphic function $G$ defined on $\widetilde{V}^+_0$, where $t=t_+/t_-$. 
By changing the scaling of $\widetilde{w}$, we may assume that $G(0, 0)=1$. 
In \S \ref{section:prf}, we will prove Theorem \ref{thm:arnold_for_nodal_rational} by showing that one may assume that $G\equiv 1$ by changing coordinate functions appropriately. 

\subsection{Preliminaries on a cycle of rational curves in general}\label{prelim:general}

Let $C$ be a cycle of rational curves in general. 
Denote by $n=n(C)$ the number of the irreducible components of $C$. 
Here we treat the case where $n\geq 2$. 
Denote by $\{C_{(\nu)}\}_{\nu=1}^n$ the set of all irreducible components of $C$. 
We sometimes use the notation $C_{(0)}:=C_{(n)}$. 
We may assume that $C_{(\nu)}\cap C_{(\mu)}\not=\emptyset$ if and only if $\nu-\mu=\pm 1$ modulo $n$. 
It holds that $H^1(C, \mathcal{O}_C)=\mathbb{C}$ also in this case, 
since $H^1(C, i_*\mathcal{O}_{\widetilde{C}})=H^1(\widetilde{C}, \mathcal{O}_{\widetilde{C}})=0$ follows from the same exact sequence as we considered in \S \ref{section:Pic}, where $i\colon\widetilde{C}\to C$ is the normalization 
(Note that the higher direct images vanish for $i$, since $i$ is a finite morphism). 
Thus ${\rm Pic}^0(C)$ is naturally identified with $H^1(C, \mathbb{C}^*)=\mathbb{C}^*$ also in this case. 

The following example can be regarded as the standard model of a neighborhood of $C$. 

\begin{example}\label{ex:standard_model_general}
Let $\{(\widetilde{V}_{(\nu)}, \widetilde{C}_{(\nu)}, \widetilde{V}_{0 (\nu)}^\pm, S_{(\nu)}, T_{(\nu)}, \xi_{0 (\nu)}, \xi_{\infty (\nu)})\}_{\nu=1}^n$ be $n$-copies of \linebreak
$(\widetilde{V}, \widetilde{C}, \widetilde{V}_{0}^\pm, S, T, \xi_{0}, \xi_{\infty})$ in Example \ref{ex:standard_model}. 
Define a biholomorphism $F_{\nu+1, \nu}\colon \widetilde{V}_{0 (\nu+1)}^+\to \widetilde{V}_{0 (\nu)}^-$ by 
$(F_{\nu+1, \nu})^*(T_{(\nu)}, \xi_{\infty (\nu)})=(t_{\nu+1, \nu}\cdot \xi_{0 (\nu)}, S_{(\nu)})$ 
for $\nu=0, 1, \dots, n-1$, where $t_{\nu+1, \nu}\in {\rm U}(1)$ and 
\[
(\widetilde{V}_{(0)}, \widetilde{C}_{(0)}, \widetilde{V}_{0 (0)}^\pm, S_{(0)}, T_{(0)}, \xi_{0 (0)}, \xi_{\infty (0)}):=(\widetilde{V}_{(n)}, \widetilde{C}_{(n)}, \widetilde{V}_{0 (n)}^\pm, S_{(n)}, T_{(n)}, \xi_{0 (n)}, \xi_{\infty (n)}). 
\]
Let $i\colon \widetilde{V}:=\coprod_{\nu=1}^n\widetilde{V}_\nu\to V$ be the quotient by the relation induced by $F_{\nu+1, \nu}$'s. 
Denote by $C$ the image $i(\widetilde{C})$, where $\widetilde{C}:=\coprod_{\nu=1}^n\widetilde{C}_{(\nu)}$. 
Then $C$ is a cycle of $n$ rational curves embedded in $V$ with $t(N_{C/V})=\prod_{\nu=0}^{n-1}t_{\nu+1, \nu}$. 
\end{example}

\begin{remark}\label{rmk:V_tilde_general}
It follows from the same argument as in \S \ref{section:V_tilde} that 
one can construct a finite covering map $i\colon \widetilde{V}\to V$ as in Example \ref{ex:standard_model_general} for a small neighborhood $V$ of $C$ also in the case where $C$ consists of $n$ irreducible components ($n\geq 2$). 
In this case, $\widetilde{V}$ is the disjoint union of a neighborhood $\widetilde{V}_{(\nu)}$ of each irreducible component $C_{(\nu)}$ of $C$ with the same local coordinates $(S_{(\nu)}, T_{(\nu)}, \xi_{0 (\nu)}, \xi_{\infty (\nu)})$ as in Example \ref{ex:standard_model_general} (Here we use Grauert's theorem \cite{G} again). 
In general, the gluing morphism $F_{\nu+1, \nu}\colon \widetilde{V}_{0 (\nu+1)}^+\to \widetilde{V}_{0 (\nu)}^-$ needs not to coincides with the one in Example \ref{ex:standard_model_general}. 
From the same argument as in \S \ref{section:V_tilde} by using \cite[Theorem 1.4]{K6}, 
it follows that, by choosing $S_{(\nu)}, T_{(\nu)}, \xi_{0 (\nu)}$ and $\xi_{\infty (\nu)}$ suitably, we may assume that 
\[
(F_{\nu+1, \nu})^*(T_{(\nu)}, \xi_{\infty (\nu)})=\left(\frac{t_{\nu+1, \nu}\cdot \xi_{0 (\nu)}}{G_{\nu+1, \nu}(S_{(\nu)}, \xi_{0 (\nu)})},\ G_{\nu+1, \nu}(S_{(\nu)}, \xi_{0 (\nu)})\cdot S_{(\nu)}\right)
\]
holds for a nowhere vanishing holomorphic function $G_{\nu+1, \nu}$ defined on $\widetilde{V}_{0 (\nu+1)}^+$ and a constant $t_{\nu+1, \nu}\in{\rm U}(1)$. 
In \S \ref{section:prf}, 
we will prove Theorem \ref{thm:arnold_for_nodal_rational} by showing that one may assume that $G_{\nu+1, \nu}\equiv 1$ by changing the coordinate functions appropriately. 
\end{remark}

\begin{example}\label{ex:ABU_2}
Fix a plane cubic $C_0\subset \mathbb{P}^2$ which admits only nodal singularities and consists of two irreducible components, say $C_0^{(1)}$ and $C_0^{(2)}$. 
One may assume that $C_0^{(\nu)}$ is of degree $\nu$ for $\nu=1, 2$. 
Take three points $\{p_1, p_2, p_3\}\subset C_0^{(1)}\cap (C_0)_{\rm reg}$ and 
six points $\{p_4, p_5, \dots, p_9\}\subset C_0^{(2)}\cap (C_0)_{\rm reg}$. 
Denote by $\pi\colon S\to \mathbb{P}^2$ the blow-up at $Z:=\{p_1, p_2, \dots, p_9\}$ and by $C$ the strict transform $(\pi^{-1})_*C_0$. 
Then it is known that, by taking a normalization $i\colon \mathbb{P}^1\amalg\mathbb{P}^1\to C_0$ with $i^{-1}((C_0)_{\rm sing})= \{0, \infty\}$ appropriately, the complex constant $t=t(N_{C/S})\in\mathbb{C}^*$ can be calculated by 
$t = \prod_{\nu=1}^9i^{-1}(p_\nu) \in \mathbb{C}^* = H^1(C_0, \mathbb{C}^*)$, 
where we are identifying $C_0$ and $C$ via $\pi$. 
\end{example}

\begin{example}\label{ex:ABU_3}
Fix a plane cubic $C_0\subset \mathbb{P}^2$ which admits only nodal singularities and consists of three irreducible components, say $C_0^{(1)}$, $C_0^{(2)}$ and $C_0^{(3)}$. 
Each $C_0^{(\nu)}$ is a line for $\nu=1, 2, 3$. 
Take three points $\{p_1, p_2, p_3\}\subset C_0^{(1)}\cap (C_0)_{\rm reg}$, 
$\{p_4, p_5, p_6\}\subset C_0^{(2)}\cap (C_0)_{\rm reg}$, and 
$\{p_7, p_8, p_9\}\subset C_0^{(3)}\cap (C_0)_{\rm reg}$. 
Denote by $\pi\colon S\to \mathbb{P}^2$ the blow-up at $Z:=\{p_1, p_2, \dots, p_9\}$ and by $C$ the strict transform $(\pi^{-1})_*C_0$. 
Then it is known that, by taking a normalization $i\colon \mathbb{P}^1\amalg\mathbb{P}^1\amalg\mathbb{P}^1\to C_0$ with $i^{-1}((C_0)_{\rm sing})= \{0, \infty\}$ appropriately, the complex constant $t=t(N_{C/S})\in\mathbb{C}^*$ can be calculated by 
$t = \prod_{\nu=1}^9i^{-1}(p_\nu) \in \mathbb{C}^* = H^1(C_0, \mathbb{C}^*)$, 
where we are identifying $C_0$ and $C$ via $\pi$. 
\end{example}

Note that each point of ${\rm Pic}^0(C_0)$ is attained by choosing appropriate nine points configuration $Z$ in Examples \ref{ex:ABU_2} and \ref{ex:ABU_3} (as in Example \ref{ex:ABU}). 

%
%
%
%
%
%


\section{Injectivity of the restriction $\lim_{V^*\to} H^1(V^*, \mathcal{O}_{V^*})\to H^1(C, \mathcal{O}_C)$}

We will show Theorem \ref{thm:arnold_for_nodal_rational} in \S \ref{section:prf} by the strategy as we mentioned in \S \ref{section:V_tilde} and Remark \ref{rmk:V_tilde_general}. 
When $C$ is a rational curve with a node, for example, we will choose suitable coordinates of $\widetilde{V}$ so that $G\equiv 1$ holds. 
Consider the composition $g$ of the natural biholomorphism $V_0\to \widetilde{V}^+_0$ and the branch of $\frac{1}{2\pi\sqrt{-1}}\log G$ such that $g(0, 0)=0$. 
By the arguments we will explain the details in \S \ref{section:prf}, the problem can be reduced to showing that the cohomology class 
$\alpha:=[\{(V^+, -g|_{V^+}), (V^-, 0)\}]
\in \check{H}^1(\{V_j\}, \mathcal{O}_V)$ 
is trivial. 
As it is easily observed that $\alpha|_C=0\in H^1(C, \mathcal{O}_C)$ (see the proof of ``Proposition \ref{prop:H^1} $\Rightarrow$ Proposition \ref{prop:H^1_main}" below), it is sufficient to show the injectivity of the restriction $H^1(V, \mathcal{O}_V)\to H^1(C, \mathcal{O}_C)$ by shrinking $V$ in a suitable sense. 
For such a purpose, we will show the following: 

\begin{proposition}\label{prop:H^1_main_general}
Let $C$ be a cycle of a curve embedded in non-singular surface $V$ such that the normal bundle $N_{C/V}$ is topologically trivial and satisfies Diophantine condition as in Theorem \ref{thm:arnold_for_nodal_rational}. 
For any element $\alpha$ of the kernel of the restriction $H^1(V, \mathcal{O}_V)\to H^1(C, \mathcal{O}_C)$, there exists a neighborhood $V^*$ of $C$ such that $\alpha|_{V^*}=0\in H^1(V^*, \mathcal{O}_{V^*})$. 
\end{proposition}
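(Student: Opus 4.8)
The plan is to reduce the vanishing of $\alpha|_{V^*}$ to an estimate on Čech representatives obtained by pulling back to the covering $\widetilde{V}\to V$ of \S\ref{section:V_tilde}, and then to control these representatives by a complex-dynamical argument driven by the Diophantine condition. Since $N_{C/V}$ is ${\rm U}(1)$-flat with $t(N_{C/V})=e^{2\pi\sqrt{-1}\theta}$, the coordinate $\widetilde{w}$ on $\widetilde{V}$ transforms by unimodular constants under the deck action, so multiplication of a function by $\widetilde{w}^m$ corresponds, on each overlap $V^\pm$, to multiplication by a rotation $t_\pm^m$. The key fact I would exploit is that $\alpha|_C=0$ means the $0$-th order part of the representative is a coboundary on $C$; the Diophantine inequality $|n\theta-m|\ge A n^{-\alpha}$ controls how the higher-order Taylor coefficients in $\widetilde{w}$ can be successively solved away.

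**Main steps.**
First I would represent $\alpha$ by a cocycle $\{(V^+,f^+),(V^-,f^-)\}\in\check{H}^1(\{V_j\},\mathcal{O}_V)$ and expand each $f^\pm$ in a Taylor series in the normal variable $\widetilde{w}$ along $C$, writing $f^\pm=\sum_{m\ge 0} f^\pm_m(z)\,\widetilde{w}^m$ with coefficients $f^\pm_m$ holomorphic on the curve pieces $U^\pm$. Since $H^1(C,\mathcal{O}_C)\cong\mathbb{C}$ and $\alpha|_C=0$, the order-zero part $\{f^\pm_0\}$ is a coboundary on $C$, which removes the obstruction at level $m=0$. Second, I would set up the inductive scheme: at each order $m$ I must solve a Cousin-type (additive $\bar\partial$/Čech) equation on $C$ twisted by the flat line bundle $N_{C/V}^{\otimes m}$, whose Čech class lives in $H^1(C,\mathcal{O}_C(N_{C/V}^{\otimes m}))$. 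By the cohomology computation recalled in \S\ref{section:Pic}, for $m\ge 1$ this group vanishes precisely because $t(N_{C/V})^m=e^{2\pi\sqrt{-1}m\theta}\neq 1$, so each order can be killed; the solution at order $m$ involves dividing by $(t^m-1)=e^{2\pi\sqrt{-1}m\theta}-1$, i.e. by a quantity comparable to $\mathrm{dist}(m\theta,\mathbb{Z})$. Third, I would assemble these order-by-order solutions into a genuine holomorphic function $u$ on a (possibly shrunken) neighborhood $V^*$ with $f^\pm = u|_{V^+}-u|_{V^-}$ there, which is exactly the statement that $\alpha|_{V^*}=0$.

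**The main obstacle.**
The hard part will be the convergence of the formal solution $u=\sum_m u_m(z)\,\widetilde{w}^m$ on some definite polydisc-type neighborhood $V^*$. Each inductive step divides by $t^m-1$, and without control these small divisors would destroy any radius of convergence; this is exactly where the Diophantine hypothesis $|n\theta-m|\ge A n^{-\alpha}$ enters, giving $|t^m-1|=|e^{2\pi\sqrt{-1}m\theta}-1|\gtrsim A\,m^{-\alpha}$, so that the accumulated denominators grow only polynomially in $m$ rather than exhibiting Liouville-type collapse. I would run the standard majorant/complex-dynamical estimate originating in \cite{S} and used in \cite[Theorem 3]{U83} and \cite[Theorem 1.4]{K6}: bound the sup-norm of $u_m$ by the sup-norms of the data together with the small-divisor factor $m^{\alpha}$, and show the resulting series converges after shrinking the fiber radius $\varepsilon$ geometrically. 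A secondary technical point is handling the general cycle case $n\ge 2$ uniformly with the nodal case $n=1$: there the relevant divisor is $D_0-D_\infty$ across $n$ components and one must track the cumulative rotation $\prod t_{\nu+1,\nu}=e^{2\pi\sqrt{-1}\theta}$ rather than a single factor, but the small-divisor input is identical so the same estimate applies after passing to the disjoint covering $\widetilde{V}=\coprod_\nu\widetilde{V}_{(\nu)}$ of Remark~\ref{rmk:V_tilde_general}.
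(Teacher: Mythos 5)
Your proposal follows essentially the same route as the paper: after reducing to a \v{C}ech cocycle on the two-set covering, you expand in the normal coordinate, kill each order using $H^0(C,N_{C/S}^{-m})=H^1(C,N_{C/S}^{-m})=0$ for the non-torsion flat twist (the paper's Lemma \ref{lem:ueda_type_estim} supplies the uniform constant $K/|1-t^m|$ you implicitly assume), and control the small divisors $|1-t^m|\gtrsim A\,m^{-\alpha}$ by the Siegel majorant method, exactly as in Steps 1--3 of the paper's proof of Proposition \ref{prop:H^1}. The only points you gloss over are technical ones the paper handles explicitly---the feedback terms $h^{\pm}_m$ from re-expanding lower-order coefficients in the other chart (which is why the majorant equation is quadratic) and the careful choice of coordinates making Cauchy estimates uniform (Remark \ref{rmk:R})---but these are refinements of the same argument, not a different approach.
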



\subsection{Proof of Proposition \ref{prop:H^1_main_general} when $C$ is a rational curve with a node}

\subsubsection{Notation and statement in this case}

Assume that $C$ is a rational curve with a node. 
Then we can use the notation as in \S \ref{section:V_tilde}. 
By a simple argument, Proposition \ref{prop:H^1_main_general} can be reworded as follows: 

\begin{proposition}\label{prop:H^1_main}
Let $F_+$ and $F_-$ be holomorphic functions defined on $V^+$ and $V^-$, respectively, such that $\{(U^\pm, F_\pm|_{U^\pm})\}$ extends to a holomorphic function defined on $U_0$. 
Then there exists a neighborhood $V^*$ of $C$ such that the class 
\[
\alpha:=
[\{(V^*\cap V^+, F_+|_{V^*\cap V^+}), (V^*\cap V^-, F_-|_{V^*\cap V^-})\}]\in \check{H}^1(\{V^*\cap V_j\}, \mathcal{O}_{V^*})
\]
is trivial. 
\end{proposition}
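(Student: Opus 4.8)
The plan is to prove Proposition~\ref{prop:H^1_main} by an explicit complex dynamical iteration, following the technique of \cite{S} that also underlies \cite{U83} and \cite[Theorem 1.4]{K6}. The goal is to exhibit, on a sufficiently small neighborhood $V^*$ of $C$, holomorphic functions $h_0$ on $V^*\cap V_0$ and $h_1$ on $V^*\cap V_1$ so that
\begin{equation*}
F_+ = h_0 - h_1 \quad\text{on}\ V^*\cap V^+, \qquad
F_- = h_0 - h_1 \quad\text{on}\ V^*\cap V^-,
\end{equation*}
which is exactly the statement that the \v{C}ech cocycle $\alpha$ is a coboundary and hence trivial. First I would pass to the covering space $\widetilde{V}\subset\mathcal{O}_{\mathbb{P}^1}(-2)$ constructed in \S\ref{section:V_tilde}, where the geometry is linearized: the self-map $F\colon\widetilde{V}_0^+\to\widetilde{V}_0^-$ has the model form $F^*(T,\xi_\infty)=(t\cdot\xi_0,S)$ with $t=e^{2\pi\sqrt{-1}\theta}\in{\rm U}(1)$. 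Pulling $F_\pm$ back to $\widetilde{V}$ and using the hypothesis that $\{(U^\pm,F_\pm|_{U^\pm})\}$ glues to a holomorphic function on $U_0$, I would expand the relevant functions in the fiber coordinate $\xi_0$ (equivalently $\widetilde{w}$) and match coefficients order by order.

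The key steps, in order, would be as follows. After linearization, the obstruction to solving the coboundary equation at each homogeneous degree $m\geq 1$ in $\xi_0$ reduces to dividing a Fourier-type coefficient by a factor of the shape $1-t^m = 1-e^{2\pi\sqrt{-1}m\theta}$; the hypothesis $\alpha|_C=0$ kills the degree-zero obstruction, so only $m\geq1$ survives. The Diophantine condition $|n\theta-m|\geq A\, n^{-\alpha}$ gives the lower bound $|1-t^m|\geq c\,m^{-\alpha}$, so these small-denominator factors decay only polynomially and can be inverted without destroying convergence. I would then set up the iteration: at each stage $k$ one kills the lowest surviving order, controls the size of the correction on a slightly shrunk polydisc using Cauchy estimates together with the Diophantine denominator bound, and checks that the geometric loss from shrinking the domain is beaten by the super-polynomial gain in the vanishing order. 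Summing the corrections produces convergent holomorphic functions $h_0,h_1$ on a common smaller neighborhood $V^*$.

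The main obstacle will be the convergence estimate in the presence of small denominators: one must choose the sequence of radii $\varepsilon_k\downarrow\varepsilon_\infty>0$ and the truncation orders so that the polynomial blow-up $m^{\alpha}$ from inverting $1-t^m$, multiplied by the Cauchy-estimate factor from each domain restriction, is dominated by the rapidly shrinking tail of the remaining Taylor coefficients. This is the standard but delicate heart of a Siegel/KAM-type majorant argument, and it is precisely the point where the Diophantine hypothesis is indispensable; the counterexample in Example~\ref{ex:ABU}'s companion (the Liouville-type $\varphi$ of \cite[p.~606]{U83}) shows the scheme must fail without it. A secondary technical point, requiring care but not deep, is descending from the solution $\widetilde{h}_0,\widetilde{h}_1$ on the covering $\widetilde{V}$ back down to genuine functions on $V^*\subset V$: one must verify that the $F$-equivariance built into the construction makes the corrections descend to single-valued holomorphic functions on $V_0$ and $V_1$, so that the resulting coboundary is defined downstairs.
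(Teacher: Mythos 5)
Your plan is essentially the paper's own proof: the paper likewise reduces to a cocycle vanishing on $C$ (by extending the glued boundary function from $U_0$ to a Stein neighborhood and subtracting), solves the additive Cousin problem order by order in the defining function $w$, invokes the vanishing $H^0(C, N_{C/S}^{-m})=H^1(C, N_{C/S}^{-m})=0$ together with the uniform small-denominator bound $K/|1-t^m|$ of Lemma \ref{lem:ueda_type_estim} at each order, and establishes convergence by Siegel's majorant functional equation (\ref{eq:func_eq_B}) --- a fixed-domain variant of the shrinking-radii iteration you sketch, with Remark \ref{rmk:R}'s coordinate compatibility playing the role of your Cauchy-estimate bookkeeping. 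The only cosmetic differences are that the paper works directly on $V$ with coordinates pulled back from $\widetilde{V}$ (so no descent/equivariance step arises) and splits the nodal coefficients as $a_{0,\nu}=p_\nu(x)+q_\nu(y)+r_\nu$ to control the re-expansion terms $h^{\pm}_m$ that drive the quadratic majorant.
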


As will be proven immediately after, Proposition \ref{prop:H^1_main} follows from: 

\begin{proposition}\label{prop:H^1}
Let $F_+$ and $F_-$ be holomorphic functions defined on $V^+$ and $V^-$, respectively, such that $F_{\pm}|_{U^\pm}\equiv 0$. 
Then there exists a neighborhood $V^*\subset V$ of $C$ such that the \v{C}ech cohomology class
\[
[\{(V^*\cap V^+, F_+|_{V^*\cap V^+}), (V^*\cap V^-, F_-|_{V^*\cap V^-})\}]\in \check{H}^1(\{V^*\cap V_j\}, \mathcal{O}_{V^*}(-C))
\]
is trivial. 
\end{proposition}

\begin{proof}[Proof of ``Proposition \ref{prop:H^1} $\Rightarrow$ Proposition \ref{prop:H^1_main}"]
Denote by $g_0$ the extension of $\{(U^\pm, F_\pm|_{U^\pm})\}$ to $U_0$. 
As $V_0$ is coverd by a Stein neighborhood of $U_0$, we obtain a holomorphic function $G_0$ on $V_0$ such that $G_0|_{U_0}=g_0$. 
By using a function $G_1$ on $V_1$ defined by $G_1\equiv 0$, consider 
\[
\beta:=[\{(V^+, (G_0-G_1)|_{V^+}), (V^-, (G_0-G_1)|_{V^-})\}]\in \check{H}^1(\{V_j\}, \mathcal{O}_{V}). 
\]
Then it follows from Proposition \ref{prop:H^1} that the class 
$(\alpha-\beta)\in \check{H}^1(\{V^*\cap V_j\}, \mathcal{O}_{V^*}(-C))$ 
is trivial for a neighborhood $V^*$ of $C$, which proves Proposition \ref{prop:H^1_main}. 
\end{proof}

Here we first give some notation which will be used in the proof of Proposition \ref{prop:H^1}. 
Let $V_0^*$ be a small neighborhood of the nodal point in $V_0$. 
Denote by $x$ the holomorphic function obtained by pulling back the function $S$ by the natural biholomorphism $V_0\to \widetilde{V}_0^+$, 
and by $y$ the one obtainded by pulling back the function $T$ by the natural biholomorphism $V_0\to \widetilde{V}_0^-$. 
We regard $(x, y)$ as coordinates of a neighborhood of $\overline{V_0^*}$. 
Note that $x\cdot y$ is a local defining function of $C$ in this locus. 
For sufficiently small positive constants $\varepsilon$ and $\delta$, we may assume that 
$V^*_0:=\{(x, y)\in V_0\mid \max\{|x|,\ |y|\}<2\varepsilon,\ |w_0|<\delta\}$. 
Denote by $U^*_0$ the subset $V^*_0\cap C$: i.e. 
$U^*_0=\{(x, y)\in V^*_0\mid |x|<2\varepsilon,\ y=0\}\cup\{(x, y)\in V^*_0\mid x=0,\ |y|<2\varepsilon\}$. 
In what follows we always assume that $\varepsilon$ and $\delta$ are sufficiently small so that $V_0^*$ is a relatively compact subset of $V_0$. 

Next we give a definition of a relatively compact subset $V_1^*$ of $V_1$. 
Denote by $z$ the holomorphic function obtained by pulling back the function $S$ by the natural biholomorphism $V_1\to \widetilde{V}_1$. 
Denote by $V_1^*$ the subset $\{(z, w_1)\in V_1\mid \varepsilon<|z|<1/\varepsilon,\ |w_1|<\delta\}$, where we are regarding $(z, w_1)$ as coordinates of this locus. 
Let $U^*_1$ be the subset of $U_1$ defined by $U^*_1:=V^*_1\cap C$: i.e. 
$U^*_1=\{(z, w_1)\in V^*_1\mid \varepsilon<|z|<1/\varepsilon,\ w_1=0\}$. 
Set $U^*_+:=U^+_0\cap U^*_1=\{(x, y)\in V^*_0\mid \varepsilon<|x|<2\varepsilon,\ y=0\}$ 
and 
$U^*_-:=U^-_0\cap U^*_1=\{(x, y)\in V^*_0\mid x=0,\ \varepsilon<|y|<2\varepsilon\}$. 
Denote by $V^*_\pm$ the connected components of $V^*_0\cap V^*_1$ which includes $U^*_\pm$ respectively, 
and by $V^*$ the subset $V^*_0\cup V^*_1=i(\{|\widetilde{w}|<\delta\})$. 

In what follows, we fix $\varepsilon$ and do not vary this value any more, 
whereas we will shrink $\delta$ as necessary. 

\subsubsection{Outline of the proof of Proposition \ref{prop:H^1}}

We will construct holomorphic functions $F_j$ on $V^*_j$ for each $j=0, 1$ such that 
$F_0|_{V^*_\pm}-F_1|_{V^*_\pm}=F_\pm|_{V^*_\pm}$ 
holds on each $V^*_\pm$ by shrinking $\delta$. 
Actually, it is sufficient to construct such $\{(V_j^*, F_j)\}$, 
since we can construct $\widehat{F}_j\colon V^*\cap V_j\to \mathbb{C}$ such that 
$\delta\{(V^*\cap V_j, \widehat{F}_j)\}
=\{(V^*\cap V^\pm, F_\pm|_{V^*\cap V^\pm})\}$ from them as follows: 
Set $\widehat{F}_j(p):=F_j(p)$ for $p\in V^*_j$. 
For $p\in (V^*\cap V_j)\setminus V^*_j$, set 
\[
\widehat{F}_j(p):= \begin{cases}
     F_{1-j}(p)+(-1)^j\cdot F_+(p) & (\text{if}\ p\in V^+) \\
     F_{1-j}(p)+(-1)^j\cdot F_-(p) & (\text{if}\ p\in V^-). 
  \end{cases}
\]
Note that $p\in V^*_{1-j}$, and thus it holds that $p\in V_j\cap V^*_{1-j}\subset V_0\cap V_1=V^+\cup V^-$ in this case. 

\subsubsection{Proof of Proposition \ref{prop:H^1} (Step 1: Construction of $F_j$'s as formal power series)}

In this step, we will construct $F_j$'s in the form of 
\[
F_0(x, y)=\sum_{\nu=1}^\infty a_{0, \nu}(x, y)\cdot w_0^\nu
\]
and 
\[
F_1(z, w_1)=\sum_{\nu=1}^\infty a_{1, \nu}(z)\cdot w_1^\nu 
\]
formally. 
Here $a_{1, \nu}$ is a function defined on $U_1^*$, which is also be regarded as a function on $V_1^*$ by pulling back the natural projection $(z, w_1)\mapsto z$. 
Similarly, $a_{0, \nu}$ is a function defined on $U_0^*$ with 
\[
a_{0, \nu} = \begin{cases}
     p_\nu + r_\nu & (\text{if}\ p\in U^+\cap U^*_0) \\
     q_\nu + r_\nu & (\text{if}\ p\in U^-\cap U^*_0), 
  \end{cases}
\]
where $p_\nu(x)$ is a holomorphic function on $U_+^*$ with $p_\nu(0)=0$,  
$q_\nu(y)$ is a holomorphic function on $U_-^*$ with $q_\nu(0)=0$, 
and $r_\nu\in\mathbb{C}$ is a constant. 
We also regard $a_{0, \nu}$ as a function defined on $V_0^*$ by setting 
$a_{0, \nu}(x, y):=p_\nu(x)+q_\nu(y)+r_\nu$, 
where $p_\nu$ and $q_\nu$ are extended by considering the pull-back by the projection $(x, y)\mapsto x$ and $(x, y)\mapsto y$, respectively. 
Denote by 
\[
F_\pm (z, w_1)=\sum_{\nu=1}^\infty b_{\pm, \nu}(z)\cdot w_1^\nu
\]
the expansion of $F_\pm$ by $w_1$ on $V^*_\pm$. 

First, let us construct $\{a_{j, 1}\}_{j=0, 1}$. 
As $N_{C/S}$ is non-torsion, 
it holds that $\check{H}^1(\{U_j\}, N_{C/S}^{-1})=0$ (see \S \ref{section:Pic}). 
Therefore, by considering the $1$-cocycle 
$[\{(U_\pm^*, b_{\pm, 1})\}]\in \check{H}^1(\{U_j^*\}, N_{C/S}^{-1})$, 
one can take $\{a_{j, 1}\}$ such that 
\[
\begin{cases}
    t_+^{-1}a_{0, 1}(z)-a_{1, 1}(z)=b_{+, 1}(z) & (\text{on}\ U^*_+) \\
    t_-^{-1}a_{0, 1}(z)-a_{1, 1}(z)=b_{-, 1}(z) & (\text{on}\ U^*_-). 
  \end{cases}
\]
Note that such $\{a_{j, 1}\}$ is unique since $H^1(C, N_{C/S}^{-1})=0$. 
By letting $r_1$ be that value of $a_{0, 1}$ at the nodal point, $p_1$ and $q_1$ are uniquely determined. 
We here remark that, for any choice of the other coefficients $\{a_{j, \nu}\}_{j=0, 1, \nu\geq 2}$, we have that 
\[
F_0-F_1= \begin{cases}
     F_++O(w_1^2) & (\text{on}\ V^*_+) \\
     F_-+O(w_1^2) & (\text{on}\ V^*_-)
  \end{cases}
\]
holds as $w_1\to 0$. 

Next, we construct $\{a_{j, n+1}\}$ by assuming that $\{a_{j, \nu}\}_{j=0, 1, \nu\leq n}$ is determined so that the following inductive assumption holds: 
for any choice of $\{a_{j, \nu}\}_{j=0, 1, \nu\geq n+1}$, 
\[
F_0-F_1= \begin{cases}
     F_++O(w_1^{n+1}) & (\text{on}\ V^*_+) \\
     F_-+O(w_1^{n+1}) & (\text{on}\ V^*_-)
  \end{cases}
\]
holds as $w_1\to 0$. 
In what follows, we regard $\{a_{j, \nu}\}_{j=0, 1, \nu\geq n+1}$ as unknown functions. 
Denote by 
\[
p_\nu(x(z, w_1))= \begin{cases}
     p_\nu(x(z)) & (\text{on}\ V^*_+) \\
     \sum_{\lambda=1}^\infty P^-_{\nu, \lambda}(z)\cdot w_1^\lambda & (\text{on}\ V^*_-)
  \end{cases}
\]
and 
\[
q_\nu(y(z, w_1))= \begin{cases}
     \sum_{\lambda=1}^\infty Q^+_{\nu, \lambda}(z)\cdot w_1^\lambda & (\text{on}\ V^*_+) \\
     q_\nu(y(z)) & (\text{on}\ V^*_-)
  \end{cases}
\]
the expansion of $p_\nu$ and $q_\nu$ by $w_1$ respectively 
(Note that $x=x(z, w_1)$ and $y=y(z, w_1)$ do not depend on $w_1$ on $V^*_+$ and $V^*_-$, respectively, in our coordinates, 
and that $q_\nu|_{U^+}\equiv q_\nu(0)=0$ and $p_\nu|_{U^-}\equiv p_\nu(0)=0$). 

On $V_+$, one can expand $F_0|_{V^*_+}$ as follows: 
\[
F_0|_{V^*_+} = \sum_{\nu=1}^\infty a_{0, \nu}(x, y)\cdot w_0^\nu 
= \sum_{\nu=1}^\infty t_+^{-\nu}\cdot \left(
p_\nu(x(z))
+\sum_{\lambda=1}^\infty Q^+_{\nu, \lambda}(z)\cdot w_1^\lambda
+r_\nu
\right)\cdot w_1^\nu. 
\]
By setting 
\[
h^+_m(z):=
\sum_{\nu=1}^{m-1} t_+^{-\nu}\cdot Q^+_{\nu, m-\nu}(z), 
\]
we have that the coefficient of $w_1^m$ in the expansion of $F_0|_{V^*_+}$ is $h^+_m(z)+t_+^{-m} \left(p_m(x(z))+r_m\right)$. 
The function $h^+_m$ can be regarded as a function obtained by pulling back a function on $U^*_+$ by the local projection $(z, w_1)\mapsto z$, which coincides with $(x, y)\mapsto x$ in this locus. 
Note that $\{h^+_m\}_{m\leq n}$ are regarded as known functions, since $h^+_m$ depends only on the data $\{q_\nu\}_{\nu=1}^{m-1}$. 
By a simple observation, it turns out that one should construct $a_{j, n+1}$'s 
so that 
\[
b_{+, n+1}(z)=h^+_{n+1}(z)+t_+^{-n-1}\left(p_{n+1}(x(z))+r_{n+1}\right)-a_{1, n+1}(z)
\]
holds on $U^*_+$ in order for the inductive assumption to hold for $n+1$. 

Similarly, we have that 
\[
F_0|_{V^*_-} = \sum_{\nu=1}^\infty a_{0, \nu}(x, y)\cdot w_0^\nu 
= \sum_{\nu=1}^\infty t_-^{-\nu}\cdot \left(
\sum_{\lambda=1}^\infty P^-_{\nu, \lambda}(z)\cdot w_1^\lambda
+q_\nu(y(z))
+r_\nu
\right)\cdot w_1^\nu 
\]
on $V^*_-$. By setting 
\[
h^-_m(z):=
\sum_{\nu=1}^{m-1} t_-^{-\nu}\cdot P^-_{\nu, m-\nu}(z), 
\]
we have that the coefficient of the expansion of $h^-_m(z)$ in $w_1^m$ is 
$h^-_m(z)+t_-^{-m}\left(q_m(y(z))+r_m\right)$. 
The function $h^-_m$ can be regarded as a function obtained by pulling back a function on $U^*_-$ by the local projection $(z, w_1)\mapsto z$, which coincides with $(x, y)\mapsto y$ in this locus. 
Note that $\{h^-_m\}_{m\leq n}$ are regarded as known functions, since $h^+_m$ depends only on the data $\{p_\nu\}_{\nu=1}^{m-1}$. 
By a simple observation, it turns out that one should construct $a_{j, n+1}$'s 
so that 
\[
b_{-, n+1}(z)=h^-_{n+1}(z)+t_-^{-n-1}\cdot \left(q_{n+1}(y(z))+r_{n+1}\right)-a_{1, n+1}(z)
\]
holds on $U^*_-$ in order for the inductive assumption to hold for $n+1$. 

By the observations above, we have that 
$b_{\pm, n+1}(z)-h^\pm_{n+1}(z)$ is known function after we finish defining $\{a_{j, \nu}\}_{j=0, 1, \nu\leq n}$. 
Therefore, we can define $\{(U^*_0, a_{0, n+1}(x, y)=p_{n+1}(x)+q_{n+1}(y)+r_n), (U^*_1, a_{1, n+1}(z))\}$ by considering the equations 
\[
\begin{cases}
    t_+^{-n-1}a_{0, n+1}(z)-a_{1, n+1}(z)=b_{+, n+1}(z)-h^+_{n+1}(z) & (\text{on}\ U^*_+) \\
    t_-^{-n-1}a_{0, n+1}(z)-a_{1, n+1}(z)=b_{-, n+1}(z)-h^-_{n+1}(z)   & (\text{on}\ U^*_-). 
  \end{cases}
\]
As $H^0(C, N_{C/S}^{-n})=H^1(C, N_{C/S}^{-n})=0$ (see \S \ref{section:Pic}), 
we actually have the unique solution. 
 
\subsubsection{Proof of Proposition \ref{prop:H^1}: (Step 2: Estimate of the coefficient functions)}
As $V^*_\pm\Subset V^\pm$, there exists a constant $M$ such that 
\[
\max\left\{
\sup_{V^*_+}|F_+|,\ 
\sup_{V^*_-}|F_-|
\right\}
<M. 
\]
In what follows, we assume that $M>1$. 
Fix a positive constant $R$ sufficiently larger than $1/\delta, 1/\varepsilon$, $\sup_{V^*_+}|w_0/y|$, $\sup_{V^*_+}|w_1/y|$, $\sup_{V^*_+}|w_1/y|$, $\sup_{V^*_-}|w_0/x|$, $\sup_{V^*_-}|w_1/x|$, and the inverses of these. 
Then we may assume that 
\[
\{(z, w_1)\mid \varepsilon<|z|<2\varepsilon ,\ |w_1|=1/R\}\subset V^*_+
\]
and 
\[
\{(z, w_1)\mid 1/(2\varepsilon)<|z|<1/\varepsilon ,\ |w_1|=1/R\}\subset V^*_-
\]
hold (see also Remark \ref{rmk:R}). 

Let $B(X)=X+\sum_{\nu=2}^\infty B_\nu X^\nu$ be the formal power series defined by 
\begin{equation}\label{eq:func_eq_B}
\sum_{\nu=2}^\infty |1-t^{\nu-1}|\cdot B_\nu X^\nu=KRM\frac{B(X)^2}{1-RB(X)}, 
\end{equation}
where the constant $K$ is a positive real number as in Lemma \ref{lem:ueda_type_estim}. 
Note that it follows from the argument in \cite{S} that $B(X)$ has a positive radius of convergence (see also \cite[Lemma 5]{U83}). 
Define a convergent power series $A(X)=\sum_{\nu=1}^\infty A_\nu X^\nu$ by 
$A_{n}:=B_{n+1}$ ($n\geq 1$): i.e. $B(X)=X+XA(X)$. 
In this step, we show that 
\begin{equation}\label{eq:A_nu}
\max_{j=0, 1}\sup_{p\in U_j^*}|a_{j, \nu}(p)|\leq A_\nu
\end{equation}
holds for each $\nu$ by induction. 

First, by Cauchy's inequality, we have that 
\[
\sup_{z\in U^*_\pm}|b_{\pm, 1}(z)|
\leq 
M\cdot R. 
\]
Therefore, the inequality (\ref{eq:A_nu}) for $\nu=1$ follows from Lemma \ref{lem:ueda_type_estim} below. 

Next we show the inequality (\ref{eq:A_nu}) for $\nu=n+1$ by assuming that it holds for $\nu=1, 2, \dots, n$. 
As it holds that 
$|h^+_{n+1}(z)|\leq
\sum_{\nu=1}^{n} |Q^+_{\nu, n+1-\nu}(z)|$, we have that 
\[
\sup_{U^*_+}|Q^+_{\nu, \lambda}|
\leq A_\nu\cdot R^\lambda
\]
holds by Cauchy's inequality. 
Therefore it follows that 
\[
\sup_{U^*_+}|h^+_{n+1}(z)|\leq
\sum_{\nu=1}^{n} A_\nu\cdot R^{n+1-\nu}
=\text{the coefficient of}\ X^{n+1}\ \text{in the expansion of}\ 
\frac{RXA(X)}{1-RX}. 
\]
Note that the same estimate holds also for $h^-_{n+1}$. As it holds that 
\[
\sup_{z\in U^+\cap U^*_\pm}|b_{\pm, n+1}(z)|
\leq MR^{n+1}
=\text{the coefficient of}\ X^{n+1}\ \text{in the expansion of}\ 
\frac{MRX}{1-RX}, 
\]
it follows from Lemma \ref{lem:ueda_type_estim} that 
\[
\max_{j=0, 1}\sup_{U^*_j}|a_{j, n+1}|\leq 
\text{the coefficient of}\ X^{n+1}\ \text{in the expansion of}\ 
\frac{1}{|1-t^{n+1}|}\cdot \frac{KRX(A(X)+M)}{1-RX}. 
\]
As $M\geq 1$, we have that 
\begin{eqnarray}
&&\text{the coefficient of}\ X^{n+1}\ \text{in the expansion of}\ 
\frac{1}{|1-t^{n+1}|}\cdot \frac{KRX(A(X)+M)}{1-RX}\nonumber \\
&\leq& \text{the coefficient of}\ X^{n+1}\ \text{in the expansion of}\ 
\frac{1}{|1-t^{n+1}|}\cdot \frac{KRMB(X)}{1-RX}\nonumber \\
&=& \text{the coefficient of}\ X^{n+2}\ \text{in the expansion of}\ 
\frac{1}{|1-t^{n+1}|}\cdot \frac{KRMXB(X)}{1-RX}\nonumber \\
&\leq& \text{the coefficient of}\ X^{n+2}\ \text{in the expansion of}\ 
\frac{KRM}{|1-t^{n+1}|}\cdot \frac{B(X)^2}{1-RB(X)}. \nonumber 
\end{eqnarray}

Thus we have the inequality (\ref{eq:A_nu}) for $\nu=n+1$ by the equation (\ref{eq:func_eq_B}). 

\subsubsection{Proof of Proposition \ref{prop:H^1} (Step 3: Convergence of $F_j$'s)}

Let us shrink $\delta$ so that it is smaller than the radius of convergence of the poser series $A(X)$. 
Then it clearly holds that 
$\sup_{V^*_1}|a_{1, \nu}|\leq A_\nu$ when we regard $a_{1, \nu}$ as a function $V^*_1$ by the rule we mentioned above. 
For $(x, y)\in V^*_0$, 
\begin{eqnarray}
|a_{0, \nu}(x, y)|
&=&|p_\nu(x)+q_\nu(y)+r_\nu| \leq |p_\nu(x)+r_\nu|+|q_\nu(y)+r_\nu|+|r_\nu| \nonumber \\
&\leq&\sup_{x\in U^+\cap U^*_0}|a_{0, \nu}(x)|+\sup_{y\in U^-\cap U^*_0}|a_{0, \nu}(y)|+|a_{0, \nu}(0, 0)| \leq 3A_\nu.\nonumber 
\end{eqnarray}
Thus we can regard $F_j$ as a holomorphic function defined on $V_j^*$. 
By construction, we have that 
$F_0|_{V^*_\pm}-F_1|_{V^*_\pm}=F_\pm|_{V^*_\pm}$ holds on $V^*_\pm$. 
\qed

\begin{remark}\label{rmk:R}
In Step 2 of the proof above, we applied Cauchy's inequality in several times, 
in which we used the fact that the circle $\{(z, w_1)\in V^*_1\mid z=z_0,\ |w_1|=1/R\}$ is included in $V^*_0$ for each $z_0\in U^*_\pm$. 
For this, we need to choose $V^*_j$'s and its coordinates appropriately as we did in 
\S \ref{section:V_tilde} and at the beginning of the proof. 
One of the most important property of our coordinates is that the projection $(z, w_1)\mapsto z$ coincides with $(x, y)\mapsto x$ on $V^*_+$ and with $(x, y)\mapsto y$ on $V^*_-$. 
On the other hand, we used an open covering of a neighborhood of $C$ taken by using a general theory (Siu's theorem \cite{Siu}) in \cite{K6}. 
Here we had to refine and shrink the open sets in order to take $R$ as a constant, see also \cite[Remark 4.3]{K6}. 
We here remark that one can slightly simplify the proof of 
\cite[Theorem 1.4]{K6} by replacing the open covering with $\{V^*_j\}$ we used in the present paper. 
\end{remark}

\begin{lemma}[{\cite[\S 4.2.3, 4.2.4]{K6}}]\label{lem:ueda_type_estim}
Let $n$ be a positive integer, $b_\pm$ a holomorphic function on $U_\pm^*$, and 
$a_j$ be a function on $U_j^*$ for $j=0, 1$ such that 
\[
\begin{cases}
     t_+^{-n}\cdot a_0-a_1= b_+ & (\text{on}\ U_+^*) \\
     t_-^{-n}\cdot a_0-a_1= b_- & (\text{on}\ U_-^*). 
  \end{cases}
\]
Then there exists a constant $K=K(C, \{U^*_j\})$ which does not depend on neither $n$, $a_j$ nor $b_\pm$ such that 
\[
\max_{j=0, 1}\sup_{U^*_j}|a_j|
\leq \frac{K}{|1-t^n|}\cdot \max\left\{\sup_{x\in U_+^*}|b_+(x)|,\ \sup_{y\in U_-^*}|b_-(y)|\right\}
\]
holds. 
\end{lemma}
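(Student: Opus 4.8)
The statement to prove is Lemma \ref{lem:ueda_type_estim}. The plan is to solve the pair of equations for $a_0$ and $a_1$ explicitly on $C=U_0^*\cup U_1^*$, which is a cycle of rational curves (here a single nodal rational curve), and then read off the claimed sup-norm estimate. Subtracting the two equations gives $(t_+^{-n}-t_-^{-n})\,a_0=b_+-b_-$ on the overlap, so formally $a_0$ is determined by $b_\pm$ divided by the scalar $t_+^{-n}-t_-^{-n}$; since $t=t_+/t_-$, one has $|t_+^{-n}-t_-^{-n}|=|t_-|^{-n}\cdot|1-t^n|$, which already explains the factor $|1-t^n|^{-1}$ in the denominator. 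The real content is that $a_0$ and $a_1$ exist as genuine global holomorphic functions (equivalently, sections of the line bundle $N_{C/S}^{-n}$), and that the constant relating their sup-norms to the sup-norms of $b_\pm$ can be chosen uniformly in $n$.

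\textbf{The cohomological reformulation.}
First I would recast the data $\{(U_\pm^*,b_\pm)\}$ as a \v{C}ech $1$-cocycle for the line bundle $N_{C/S}^{-n}$ with respect to the covering $\{U_0^*,U_1^*\}$, exactly as in \S\ref{section:Pic}: the transition functions $t_\pm^{-n}$ are the cocycle of $N_{C/S}^{-n}$, and the equations say that $\{a_j\}$ is a splitting of the cocycle $\{b_\pm\}$. Because $t=t(N_{C/S})$ is a non-torsion element of ${\rm U}(1)$, we have $t^n\neq 1$ for every positive $n$, and the vanishing $H^0(C,N_{C/S}^{-n})=H^1(C,N_{C/S}^{-n})=0$ recorded in \S\ref{section:Pic} guarantees that a unique splitting $\{a_j\}$ exists for each $n$. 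So existence and uniqueness are free from the earlier results; the only genuine work is the quantitative bound.

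\textbf{Producing the uniform constant.}
The strategy for the estimate is to write the solution via explicit integral (Cauchy-type) kernels on the normalization $\widetilde{C}\cong\mathbb{P}^1$. Pulling everything back under $i\colon\widetilde{C}\to C$, the two sheets $U_\pm^*$ correspond to the two ends (near $0$ and near $\infty$) of an annulus $\{\varepsilon<|z|<1/\varepsilon\}$, and solving the splitting amounts to decomposing a function on the overlap into a part holomorphic near $0$ and a part holomorphic near $\infty$, then dividing by $t_+^{-n}-t_-^{-n}$. Concretely I would use the Laurent expansion on the annulus: the negative-power part and non-negative-power part give the two pieces, and the gluing scalar contributes the factor $|1-t^n|^{-1}$ after factoring out $|t_-|^{-n}$ (which is bounded since $t_\pm\in{\rm U}(1)$, so $|t_-|=1$). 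The sup-norms of the Laurent projections are controlled by the sup-norm of the input times a constant depending only on the geometry of the fixed annulus — i.e. on $\varepsilon$ and on the fixed covering $\{U_j^*\}$ — and crucially \emph{not} on $n$, since $n$ enters only through the scalar $t^{\pm n}$ that has already been pulled out. This uniformity is the point of asserting $K=K(C,\{U_j^*\})$.

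\textbf{Main obstacle.}
The delicate step is precisely isolating the $n$-dependence into the single clean factor $|1-t^n|^{-1}$ while keeping the remaining operator norm bounded independently of $n$. The danger is that the Laurent-decomposition bounds on the annulus might secretly depend on $n$ through the way the two equations are combined; one must check that the recombination $a_0=(b_+-b_-)/(t_+^{-n}-t_-^{-n})$ followed by $a_1=t_+^{-n}a_0-b_+$ does not reintroduce an $n$-dependent blow-up beyond the stated $|1-t^n|^{-1}$. Since $t_\pm$ lie on the unit circle, $|t_+^{-n}|=|t_-^{-n}|=1$, so these recombination steps are bounded uniformly, and the only growth is the genuine small-denominator factor $|1-t^n|$ — which is exactly where the Diophantine condition on $\theta$ will later be invoked (in the convergence argument of Step 2 via \eqref{eq:func_eq_B}), though it is not needed for this lemma itself. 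I would conclude by taking $K$ to be the product of the annulus-projection constant and the bounded recombination constants, and noting the whole argument is the content of \cite[\S4.2.3, 4.2.4]{K6} cited in the lemma's attribution.
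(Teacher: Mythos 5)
Your proposal has a genuine gap at its central step. The two equations of the lemma hold on the \emph{disjoint} sets $U_+^*$ and $U_-^*$ — the two connected components of $U_0^*\cap U_1^*$, one along each branch of the node — so they cannot be subtracted pointwise: the expression $b_+-b_-$ has empty common domain, and the formula $a_0=(b_+-b_-)/(t_+^{-n}-t_-^{-n})$, which you use both to motivate the factor $|1-t^n|^{-1}$ and as the actual ``recombination'' in your final step, is meaningless. For the same reason your Laurent picture — one function on one overlap annulus, split into a piece holomorphic near $0$ and a piece holomorphic near $\infty$ — is the picture for a two-set covering with connected overlap (the elliptic-curve/Ueda setting), not for the nodal curve. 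Here, on the normalization, the splitting has \emph{three} pieces ($a_1$ on the middle annulus, and the restrictions $a_0|_{U^+}$, $a_0|_{U^-}$ on disks about $0$ and $\infty$), subject to the matching condition $a_0|_{U^+}(0)=a_0|_{U^-}(\infty)$, because $a_0$ is a single function on a neighborhood of the node. If one matches Laurent coefficients correctly, every coefficient of $a_j$ in degree $k\neq 0$ is determined from $b_\pm$ with constants independent of $n$ (since $|t_\pm|=1$), and the small denominator enters \emph{only} through the constant terms, via a $2\times 2$ linear system whose determinant is $t_+^{-n}-t_-^{-n}$. This mechanism — the factor $|1-t^n|^{-1}$ arising solely from the value at the node — is the real content of the lemma, and your proposal never reaches it.

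For comparison, the paper isolates the constants by differentiating: the $1$-forms $\omega_0=da_0$, $\omega_1=da_1$ satisfy $t_\pm^{-n}\omega_0-\omega_1=db_\pm$, and on the normalization the two branches are disjoint, so $t_\pm^{-n}$ can be absorbed into $\omega_0$ branchwise without changing sup norms (Lemma \ref{lem:KS_type_estim}, reducing to the $n=0$ case, i.e.\ \cite[Lemma 2]{KS}); this yields derivative bounds with an $n$-independent constant $K_0K_1$. Integrating from the nodal point recovers $p$ and $q$ up to the constant $r$, and then $r$ and the normalizing constant $C_1$ of $a_1$ are solved from the linear system with determinant $t_+^{-n}-t_-^{-n}$, producing exactly one factor of $|1-t^n|^{-1}$. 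Finally, the paper transfers the estimate from the shrunk covering $\{U_j^{**}\}$ back to $\{U_j^*\}$ by using the equations themselves on $U_j^*\setminus U_j^{**}$ — a step any Cauchy/Laurent-type argument also requires, since such estimates only hold on relatively compact subsets, and which your sketch omits. A corrected coefficient-matching argument along your lines could be made to work and would be somewhat more elementary than the paper's route through \cite{KS}, but as written your solution operator does not exist.
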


In the rest of this subsection, we give a proof of Lemma \ref{lem:ueda_type_estim} for the convenience of the reader, although its statement is nothing but a summary of some arguments in \cite[\S 4.2.3, 4.2.4]{K6} intrinsically. 
Note that $t^n\not=1$ and $H^0(C, N_{C/S}^{-n})=H^1(C, N_{C/S}^{-n})=0$ hold (as we mentioned in \S \ref{section:Pic}), since 
$N_{C/S}$ is non-torsion. 
Therefore, $a_j$'s are uniquely determined by $b_\pm$. 

\begin{proof}[Proof of Lemma \ref{lem:ueda_type_estim}]
Set $M:=\max\left\{\sup_{x\in U_+^*}|b_+(x)|,\ \sup_{y\in U_-^*}|b_-(y)|\right\}$. 
Let $r$ be the value of $a_0$ at the nodal point. 
Then there uniquely exists a function $p$ on $U^+\cap U^*_0$ and $q$ on $U^-\cap U^*_0$ such that 
\[
a_0=\begin{cases}
     p+r & (\text{on}\ U^+\cap U^*_0) \\
     q+r & (\text{on}\ U^-\cap U^*_0). 
  \end{cases}
\]
Define $1$-forms $\omega_0$ and $\omega_1$ by 
\[
\omega_0:=da_0=\begin{cases}
     p'(x)dx & (\text{on}\ U^+\cap U^*_0) \\
     q'(y)dy & (\text{on}\ U^-\cap U^*_0) 
  \end{cases}
\]
and $\omega_1:=da_1=a_1'(z)dz$. 
By the assumption, we have that $t_\pm^{-n}\cdot \omega_0-\omega_1= db_\pm (=b_\pm'(z)dz)$ on $U_\pm^*$. 
Define a new open covering $\{U_j^{**}\}$ by 
\[
U^{**}_0:=\left\{(x, y)\in U^*_0\left| \max\{|x|, |y|\}<\frac{5\varepsilon}{3}\right.\right\},\ 
U^{**}_1:=\left\{(z, 0)\in U^*_1\left| \frac{4\varepsilon}{3} < |z| < \frac{3}{4\varepsilon}\right.\right\}. 
\]
As $U^{**}_j\Subset U^*_j$, we have that 
\[
\sup_{z\in U^\pm\cap U^{**}_{01}}|b_\pm'(z)|\leq K_1\cdot M
\]
holds on a constant $K_1>0$, where $U^{**}_{01}:=U^{**}_0\cap U^{**}_1$. 
By Lemma \ref{lem:KS_type_estim}, we have that 
\[
\max\left\{
\sup_{x\in {U^{**}_0}\cap {U^+}}|p'(x)|, 
\sup_{y\in {U^{**}_0}\cap {U^-}}|q'(y)|, 
\sup_{z\in {U^{**}_1}}|a_1'(z)|
\right\}
\leq K_0K_1M
\]
holds for a constant $K_0$. 
By considering the path integral from the nodal point, we have that 
\[
\max\left\{
\sup_{x\in {U^{**}_0}\cap {U^+}}|p(x)|, 
\sup_{y\in {U^{**}_0}\cap {U^-}}|q(y)|
\right\}
\leq \frac{5\varepsilon}{3} K_0K_1M. 
\]
By fixing point $z_\pm$ from 
$U^{**}_{01}\cap U^\pm$ and letting $C_\pm:=b_\pm(z_\pm)$ and $C_1:=a_1(z_+)$, respectively, we have that 
\[
b_\pm(z)=C_\pm+\int_{z_\pm}^zb_\pm'(\zeta)d\zeta,\ 
a_1(z)=C_1+\int_{z_+}^za_1'(\zeta)d\zeta. 
\]
Note that 
\[
\sup_{z\in U^{**}_1}\left|\int_{z_+}^za_1'(\zeta)d\zeta\right|
\leq K_2\cdot \sup_{z\in {U^{**}_1}}|a_1'(z)|
\leq K_0K_1K_2M
\]
holds for a constant $K_2$ which depends only on the diameter of $U^{**}_1$ (or equivalently, only on $\varepsilon$). 
As it follows 
\[
\begin{cases}
     t_+^{-n}\cdot (-p(z_+)+r)-C_1=C_+ \\
     t_-^{-n}\cdot (-q(z_-)+r)-\left(\int_{z_+}^{z_-} a_1'(z)dz + C_1\right)=C_-, 
  \end{cases}
\]
we have that 
$r=\frac{1}{t_+^{-n}-t_-^{-n}}\cdot\left(D_+-D_-\right)$ 
and 
$C_1=\frac{1}{t_+^{-n}-t_-^{-n}}\cdot\left(t_-^{-n}D_+-t_+^{-n}D_-\right)$, 
where 
$D_+:=t_+^{-n}p(z_+)+C_+$ and $D_-:=t_-^{-n}q(z_-)+C_-+\int_{z_+}^{z_-}a_1'(\zeta)d\zeta$. 
Note that 
\[
|D_+|\leq |p(z_+)|+|C_+|\leq\left(1+\frac{5\varepsilon}{3} K_0K_1\right)M
\]
and 
\[
|D_-|\leq |q(z_-)|+|C_-|+\sup_{z\in U^{**}_1}\left|\int_{z_+}^za_1'(\zeta)d\zeta\right|\leq \left(1+\frac{5\varepsilon}{3} K_0K_1+K_0K_1K_2\right)M. 
\]
Let us denote by $K_3$ the constant $2+\frac{10\varepsilon}{3} K_0K_1+K_0K_1K_2$. 
Then it follows from the arguments above that 
\[
\sup_{z\in U^{**}_1}\left|a_1(z)\right|
\leq |C_1|+
\sup_{z\in U^{**}_1}\left|\int_{z_+}^za_1'(\zeta)d\zeta\right|
\leq K_3\cdot \left(1+\frac{1}{|1-t^n|}\right)\cdot M
\]
and 
\[
\sup_{z\in U^{**}_0}\left|a_0(z)\right|
\leq K_3\cdot \left(1+\frac{1}{|1-t^n|}\right)\cdot M. 
\]
Thus we have 
\[
\max_{j=0, 1}\sup_{U^{**}_j}|a_j|< \frac{3K_3}{|1-t^n|}\cdot M. 
\]

When $z\in U^*_1\setminus U^{**}_1$, it holds that $z\in U^+\cap U^{**}_0$ or $z\in U^-\cap U^{**}_0$. 
In the former case, we have that 
\[
|a_1(z)|=|t_+^{-n}a_0(z)-b_+(z)|\leq |a_0(z)|+|b_+(z)|\leq 
\left(1+\frac{3K_3}{|1-t^n|}\right)\cdot M. 
\]
By the same arguments for the other cases, the lemma follows by letting $K:=2+3K_3$. 
\end{proof}

\begin{lemma}\label{lem:KS_type_estim}
Let $n$ be a positive integer 
and $i\colon\widetilde{C}\to C$ be the normalization such that the preimage of the nodal point is  $\{0, \infty\}\subset \mathbb{P}^1=\widetilde{C}$. 
Denote by $\widetilde{U^{**}_j}$ the preimage $i^{-1}(U^{**}_j)$ and $\widetilde{U^\pm}$ the preimage $i^{-1}(U^{\pm})$. 
Let $\eta_{\pm}$ be $1$-forms on $\widetilde{U^\pm}\cap \widetilde{U^{**}_{01}}$ such that 
the \v{C}ech cohomology class $[\{(\widetilde{U^\pm}\cap \widetilde{U^{**}_{01}},\eta_{\pm})\}]\in \check{H}^1(\{\widetilde{U^{**}_j}\}, K_{\widetilde{C}}\otimes i^*N_{C/S}^{-n})$ is trivial. 
Denote by $\omega_j$ the $1$-form on $\widetilde{U_j^{**}}$ for $j=0, 1$ uniquely determined by 
\[
\begin{cases}
     t_+^{-n}\cdot \omega_0-\omega_1= \eta_+ & (\text{on}\ \widetilde{U^+}\cap \widetilde{U^{**}_{01}}) \\
     t_-^{-n}\cdot \omega_0-\omega_1= \eta_- & (\text{on}\ \widetilde{U^-}\cap \widetilde{U^{**}_{01}}). 
  \end{cases}
\]
Then there exists a constant $K_0=K_0(C, \{U^{**}_j\})$ which does not depend on neither $n$ nor $\eta_\pm$ such that 
\begin{eqnarray}
&&\max\left\{
\sup_{x\in \widetilde{U^{**}_0}\cap \widetilde{U^+}}|g^+_0(x)|, 
\sup_{y\in \widetilde{U^{**}_0}\cap \widetilde{U^-}}|g^-_0(y)|, 
\sup_{z\in \widetilde{U^{**}_1}}|g_1(z)|
\right\} \nonumber \\
&\leq& K_0\cdot \max\left\{\sup_{z\in \widetilde{U^+}\cap \widetilde{U^{**}_{01}}}|h_+(z)|,\ \sup_{z\in \widetilde{U^-}\cap \widetilde{U^{**}_{01}}}|h_-(z)|\right\}, \nonumber
\end{eqnarray}
where $\omega_1=g_1(z)dz$, 
\[
\omega_0=\begin{cases}
     g^+_0(x)dx & (\text{on}\ \widetilde{U^+}\cap \widetilde{U^{**}_{01}}) \\
     g^-_0(y)dy & (\text{on}\ \widetilde{U^-}\cap \widetilde{U^{**}_{01}}), 
  \end{cases}
\]
and $\eta_\pm=h_\pm(z)dz$. 
\end{lemma}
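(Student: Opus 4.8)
The plan is to prove Lemma \ref{lem:KS_type_estim} as a purely local estimate on the normalization $\widetilde{C}=\mathbb{P}^1$, where the cohomological triviality hypothesis lets us solve a Cousin-type splitting problem whose solution is \emph{unique} because $H^0(C,N_{C/S}^{-n})=H^1(C,N_{C/S}^{-n})=0$. The strategy mirrors the classical Kodaira--Spencer / Ueda bounding argument: one produces the splitting $\omega_0,\omega_1$ by an explicit integral (Cauchy-type) formula, and then bounds the sup-norms of the solution in terms of the sup-norms of the data $\eta_\pm=h_\pm(z)\,dz$ by estimating these integrals on the shrunken covering $\{\widetilde{U^{**}_j}\}$, which is relatively compact in $\{\widetilde{U^*_j}\}$. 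The crucial point is that the constant $K_0$ must be independent of $n$; this is what makes the subsequent majorant series argument in Step 2 of Proposition \ref{prop:H^1} work.

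The key steps, in order, are as follows. First I would pass everything to the normalization $i\colon\widetilde{C}\to\mathbb{P}^1$ and write the unknowns in coordinates: $\omega_1=g_1(z)\,dz$ on $\widetilde{U^{**}_1}$, and $\omega_0=g_0^+(x)\,dx$ resp.\ $\omega_0=g_0^-(y)\,dy$ on the two preimage branches, with the gluing relations $t_\pm^{-n}\omega_0-\omega_1=\eta_\pm$. Because $[\{(\widetilde{U^\pm}\cap\widetilde{U^{**}_{01}},\eta_\pm)\}]$ is the trivial class in $\check{H}^1(\{\widetilde{U^{**}_j}\},K_{\widetilde{C}}\otimes i^*N_{C/S}^{-n})$, a solution exists; uniqueness forces the solution to be given by the canonical integral kernel on $\mathbb{P}^1$. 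Second, I would exhibit the solution concretely: splitting a one-form on the annular overlap into a part holomorphic near $0$ and a part holomorphic near $\infty$ is done by the Cauchy integral $g_0(z)=\frac{1}{2\pi\sqrt{-1}}\oint \frac{h(\zeta)}{\zeta-z}\,d\zeta$ over circles contained in the overlap, with the twisting factors $t_\pm^{-n}$ entering through the transition law. Third, I would estimate: since $\widetilde{U^{**}_j}\Subset\widetilde{U^*_j}$ and the contours of integration sit in a fixed annulus of fixed modulus determined only by $\varepsilon$ (not by $n$), the standard length-times-sup bound gives $\sup|g_0^\pm|,\sup|g_1|\le K_0\cdot\max\{\sup|h_+|,\sup|h_-|\}$ with $K_0$ depending only on the geometry of the fixed covering.

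The main obstacle I expect is \emph{uniformity in $n$}. The transition cocycle for $K_{\widetilde{C}}\otimes i^*N_{C/S}^{-n}$ carries the factors $t_\pm^{\pm n}$, and a naive solution of the splitting problem would produce a denominator involving $t_+^{-n}-t_-^{-n}=t_-^{-n}(t^{-n}-1)$, so that the bound would blow up like $1/|1-t^n|$. The delicate feature of Lemma \ref{lem:KS_type_estim} is precisely that this small-divisor factor does \emph{not} appear: the estimate here is for the \emph{derivatives} $\omega_j=da_j$, i.e.\ for the splitting at the level of one-forms, and the problematic constant term that produces the $1/|1-t^n|$ factor is deferred to the integration step carried out separately in the proof of Lemma \ref{lem:ueda_type_estim}. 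Thus I must arrange the splitting so that only the $z$-dependent (exact) parts of $\eta_\pm$ are inverted here, and the freedom in the constant of integration is fixed later. Concretely, the contour integral recovering $g_1$ and $g_0^\pm$ must be set up so that the overlap region is traversed symmetrically and the $t_\pm^{-n}$ factors appear only as bounded unimodular weights (recall $t_\pm\in\mathrm{U}(1)$), never in a denominator; verifying that the resulting kernel indeed reproduces the unique cohomological solution, and that all contours remain inside the fixed covering independently of $n$, is the technical heart of the argument. Once that is in place, the sup-norm bounds follow routinely from Cauchy's inequality on these fixed contours, yielding $K_0=K_0(C,\{U^{**}_j\})$ as claimed.
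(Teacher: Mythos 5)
Your proposal is correct in substance and isolates exactly the right structural point: since $t_\pm\in{\rm U}(1)$, the twisting can only enter through unimodular weights, so no small divisor $1/|1-t^n|$ appears at the level of $1$-forms, that factor being deliberately deferred to the integration/constants step inside the proof of Lemma \ref{lem:ueda_type_estim}. Your route, however, differs from the paper's. The paper disposes of the $n$-dependence in one line: since $\widetilde{U^{**}_0}$ is a disjoint union of two disks (one meeting $\widetilde{U^+}$, one meeting $\widetilde{U^-}$), one may replace $\omega_0$ by $t_+^{-n}\cdot\omega_0$ on the first branch and $t_-^{-n}\cdot\omega_0$ on the second; this is well defined, leaves every sup-norm untouched, and converts the twisted splitting problem into the single untwisted problem $n=0$, for which the uniform bound is simply quoted from \cite[Lemma 2]{KS} (uniqueness holding because $K_{\widetilde{C}}\otimes i^*N_{C/S}^{-n}$ has degree $-2$ on $\mathbb{P}^1$, hence no global sections). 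You instead propose to re-prove the estimate by hand via a Cauchy-kernel/Laurent decomposition on the two overlap annuli, which is feasible here precisely because the covering of $\mathbb{P}^1$ is concrete (two disks plus an annulus): the coefficient-matching is triangular, each Laurent coefficient of $\omega_0,\omega_1$ is a data coefficient times a unimodular factor, and the $z^{-1}dz$ consistency condition is exactly the triviality hypothesis. What the paper's route buys is brevity and no hard analysis; what yours buys is a self-contained, explicit solution operator in place of the functional-analytic argument behind the Kodaira--Spencer lemma.

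One caution on your third step: the final estimate is not quite ``routine'' as stated. The lemma demands sup-bounds on the \emph{full} sets $\widetilde{U^{**}_j}$ with data only on the overlaps, and the naive termwise bound $\sum_k|c_k|r^k$ with $|c_k|\le M\rho^{-k}$ diverges as $r\to\rho$, so length-times-sup on a fixed contour alone does not reach the boundary of $\widetilde{U^{**}_0}$ or $\widetilde{U^{**}_1}$. You need the standard supplement: bound the ``wrong-side'' Laurent part on an intermediate circle by a geometric series with margin, recover the other part near the edge from the cocycle relation (it equals the data minus a part already bounded), and propagate with the maximum principle (using that the negative part, as a function, vanishes at $\infty$); alternatively, interpolate one more covering between $\{U^{**}_j\}$ and $\{U^*_j\}$, which the setup permits since $U^{**}_j\Subset U^*_j$. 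Also mind the chart at $\infty$: the forms there must be estimated in the coordinate $T=1/z$, the conversion factor $|dz/dT|=|z|^2$ being bounded on the fixed annuli. With these standard repairs your argument closes, with $K_0$ depending only on $\varepsilon$, i.e.\ only on $(C,\{U^{**}_j\})$, as required.
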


\begin{proof}
By replacing $\omega_0$ with 
\[
\begin{cases}
     t_+^{-n}\cdot \omega_0 & (\text{on}\ \widetilde{U^+}\cap \widetilde{U^*_{01}}) \\
     t_-^{-n}\cdot \omega_0 & (\text{on}\ \widetilde{U^-}\cap \widetilde{U^*_{01}}), 
  \end{cases}
\]
the proof of the lemma is reduced to the case of $n=0$, which follows from \cite[Lemma 2]{KS}. 
\end{proof}

\begin{remark}\label{rmk:ueda_type_estim_general}
Lemma \ref{lem:ueda_type_estim} also holds in the case where $C$ is a cycle of multiple rational curves (see \cite[\S 4.2.3, 4.2.4]{K6} for details). 
Note that \cite[Lemma 4.2]{K6} is used for the estimate of the constants appears in the proof of the general statement which corresponds to the constant $C_1$ and $r$ in the proof above. 
\end{remark}

\subsection{Proof of Proposition \ref{prop:H^1} when $C$ is a cycle of multiple rational curves}

Let $C$ be a cycle of rational curves consists of $n$ irreducible components ($n\geq 2$). 
As Proposition \ref{prop:H^1} for this $C$ is shown by intrinsically the same arguments as in the previous section, here we only explain the outline. 

Denote by $C_{(1)}, C_{(2)} \dots, C_{(n-1)}, C_{(n)}=C_{(0)}$ the irreducible components of $C$. 
For $\nu=0, 1, 2, \dots, n-1$, Fix a small neighborhood $V_\nu$ of $C_{(\nu)}\cap C_{\rm reg}$ and 
$V_{\nu, \nu+1}$ of $C_{(\nu)}\cap C_{(\nu+1)}$. 
We may assume that $V_\nu\subset i(\widetilde{V}_{(\nu)})$, and 
that $V_{\nu, \nu+1}$ is included in the image of 
$\widetilde{V}_{0 (\nu+1)}^+=F_{\nu+1, \nu}^{-1}(\widetilde{V}_{0 (\nu)}^-)$ by $i$, where we are using the notation in Remark \ref{rmk:V_tilde_general}. 
Define coordinates $(z_\nu, w_\nu)$ of $V_\nu$ by 
$i^*z_\nu=S_{(\nu)}$ and $i^*w_\nu=S_{(\nu)}\cdot \xi_{0 (\nu)}=T_{(\nu)}\cdot \xi_{\infty (\nu)}$, 
and $(x_{\nu}, x_{\nu+1})$ of $V_{\nu, \nu+1}$ by $i^*x_{\nu+1}=S_{(\nu+1)}$ and 
$i^*x_\nu=F_{\nu+1, \nu}^*T_{(\nu)}$. 
Let 
\[
F^+(z_\nu, w_\nu)=\sum_{n=1}^\infty b^+_{\nu, \nu+1, n}(z_\nu)\cdot w_{\nu}^n
\]
be a holomorphic function defined on $V_\nu\cap V_{\nu, \nu+1}$, and 
\[
F^-(z_{\nu+1}, w_{\nu+1})=\sum_{n=1}^\infty b^-_{\nu, \nu+1, n}(z_{\nu+1})\cdot w_{\nu+1}^n
\]
be a holomorphic function defined on $V_{\nu+1}\cap V_{\nu, \nu+1}$. 
Then it is sufficient to find 
a holomorphic function $F_\nu$ on $V_\nu$ and $F_{\nu, \nu+1}$ on $V_{\nu, \nu+1}$ such that 
\[
\begin{cases}
 F_{\nu, \nu+1}-F_\nu = F^+ & (\text{on}\ V_\nu\cap V_{\nu, \nu+1})\\
 F_{\nu, \nu+1}-F_{\nu-1} = F^-  & (\text{on}\ V_{\nu+1}\cap V_{\nu, \nu+1})
\end{cases}
\]
by shrinking $V$. $F_\nu$ is constructed in the form of 
\[
F_\nu(z_\nu, w_\nu)=\sum_{n=1}^\infty a_{\nu, n}(z_\nu)\cdot w_{\nu}^n, 
\]
and $F_{\nu, \nu+1}$ is of 
\[
F_{\nu, \nu+1}(x_\nu, x_{\nu+1})=\sum_{n=1}^\infty a_{\nu, \nu+1, n}(x_\nu, x_{\nu+1})\cdot w_{\nu, {\nu+1}}(x_\nu, x_{x_\nu, x_{\nu+1}})^n, 
\]
where $w_{\nu, {\nu+1}}$ is the function defined by $i^*w_{x_\nu, x_{\nu+1}}=S_{(\nu)}\cdot \xi_{0 (\nu)}$, and the functions $a_{\nu, n}(z_\nu)$ and $a_{\nu, \nu+1, n}$ are holomorphic functions defined on $C\cap V_\nu$ and $C\cap V_{\nu, \nu+1}$, respectively. 
Let $p^{\nu+1}_{\nu, n}(x_\nu)$ be a function on $C_{(\nu)}\cap V_{\nu, \nu+1}$ and 
$p^{\nu}_{\nu+1, n}(x_{\nu+1})$ be a function on $C_{(\nu+1)}\cap V_{\nu, \nu+1}$ such that 
\[
a_{\nu, \nu+1, n}(x_\nu, x_{\nu+1})=\begin{cases}
p^{\nu+1}_{\nu, n}(x_\nu)+r_{\nu, \nu+1, n} & (\text{on}\ C_{(\nu)}\cap V_{\nu, \nu+1}) \\
p^{\nu}_{\nu+1, n}(x_{\nu+1})+r_{\nu, \nu+1, n} & (\text{on}\ C_{(\nu+1)}\cap V_{\nu, \nu+1})\\
\end{cases}
\]
holds, where $r_{\nu, \nu+1, n}:=a_{\nu, \nu+1, n}(0, 0)$. 
The function $a_{\nu, \nu+1, n}$ is also regarded as a function defined on $V_{\nu, \nu+1}$ by 
$a_{\nu, \nu+1, n}(x_\nu, x_{\nu+1}):=p^{\nu+1}_{\nu, n}(x_\nu)+p^{\nu}_{\nu+1, n}(x_{\nu+1})+r_{\nu, \nu+1, n}$. 
By setting $t^+_{\nu, \nu+1}:=1$ and $t^+_{\nu, \nu+1}:=t_{\nu+1, \nu}$, we have that 
\[
\begin{cases}
w_\nu=t^+_{\nu, \nu+1}\cdot w_{\nu, \nu+1} & (\text{on}\ C_{(\nu)}\cap V_{\nu, \nu+1}) \\
w_{\nu+1}=t^-_{\nu, \nu+1}\cdot w_{\nu, \nu+1} & (\text{on}\ C_{(\nu+1)}\cap V_{\nu, \nu+1})\\
\end{cases}
\]
holds. 

By the same argument as in Step 1 of the proof of Proposition \ref{prop:H^1}, it follows that one should define $a_{\nu, n}$'s and $a_{\nu, \nu+1, n}$'s by 
\[
\begin{cases}
    (t_{\nu, \nu+1}^+)^{-n}a_{\nu, \nu+1, n+1}-a_{\nu, n}=b^+_{\nu, \nu+1, n}-h^+_{\nu, \nu+1, n} & (\text{on}\ C_{(\nu)}\cap V_{\nu, \nu+1}) \\
    (t_{\nu, \nu+1}^-)^{-n}a_{\nu, \nu+1, n+1}-a_{\nu, n+1}=b^-_{\nu, \nu+1, n}-h^-_{\nu, \nu+1, n}(z)   & (\text{on}\ C_{(\nu+1)}\cap V_{\nu, \nu+1}). 
  \end{cases}
\]
Here the functions $h^\pm_{\nu, \nu+1, n}(z_\nu)$ are defined by 
\[
h^+_{\nu, \nu+1, n}(z_\nu)
=\sum_{m=1}^{n-1}(t_{\nu, \nu+1}^+)^{-m}\cdot P^{\nu}_{\nu+1, n, n-m}(z_\nu)
\]
and 
\[
h^-_{\nu, \nu+1, n}(z_{\nu+1})
=\sum_{m=1}^{n-1}(t_{\nu, \nu+1}^-)^{-m}\cdot P^{\nu+1}_{\nu, n, n-m}(z_{\nu+1}), 
\]
where 
\[
p^{\nu}_{\nu+1, n}(x_{\nu+1}(z_\nu, w_\nu))
=\sum_{\lambda=1}^\infty P^{\nu}_{\nu+1, n, \lambda}(z_\nu)\cdot w_\nu^\lambda
\]
and 
\[
p^{\nu+1}_{\nu, n}(x_{\nu}(z_{\nu+1}, w_{\nu+1}))
=\sum_{\lambda=1}^\infty P^{\nu+1}_{\nu, n, \lambda}(z_{\nu+1})\cdot w_{\nu+1}^\lambda. 
\]
As one can estimate $|a_{\nu, n}|$ and $|a_{\nu, \nu+1, n}|$ by the same argument as in Step 2 of the proof of Proposition \ref{prop:H^1}, the proposition holds (see also Remark \ref{rmk:ueda_type_estim_general}). 
\qed

\section{Proof of Theorem \ref{thm:arnold_for_nodal_rational}}\label{section:prf}

\subsection{Proof of Theorem \ref{thm:arnold_for_nodal_rational} when $C$ is a rational curve with a node}

Let $C$ be a rational curve with a node embedded in $S$ such that the normal bundle satisfies the Diophantine assumption in Theorem \ref{thm:arnold_for_nodal_rational}. 
We the notation in \S \ref{section:V_tilde}. 
Then it is sufficient to show that we may assume $G\equiv 1$ by changing the coordinates such as $S$ and $T$. 
Let $g(S, \xi_0):=\frac{1}{2\pi\sqrt{-1}}\log G(S, \xi_0)$ be the branch such that $g(0, 0)=0$. 
By applying Proposition \ref{prop:H^1_main} to 
$F_+:=-(V_0\to\widetilde{V}^+_0)^* g$ and 
$F_-:=0$, we have that, by shrinking $\widetilde{V}$ if necessary, 
there exist holomorphic functions $h_+\colon \widetilde{V}^+_0\to \mathbb{C}$, 
$h_1\colon \widetilde{V}_1\to \mathbb{C}$ and 
$h_-\colon \widetilde{V}^-_0\to \mathbb{C}$ such that 
\[
\begin{cases}
    h_+-h_1=-g & (\text{on}\ \widetilde{V}_1\cap \widetilde{V}^+_0) \\
    h_--h_1=0 & (\text{on}\ \widetilde{V}_1\cap \widetilde{V}^-_0) 
  \end{cases}
\]
holds (Set 
$h_+:=(\widetilde{V}^+_0\to V_0)^*F_0$, 
$h_-:=(\widetilde{V}^-_0\to V_0)^*F_0$ and 
$h_1:=(\widetilde{V}_1\to V_1)^*F_1$, for 
the solution $\{(V_j, F_j)\}$ in Proposition \ref{prop:H^1_main}). 
Define a function $h$ on $\widetilde{V}$ by
\[
h:=
\begin{cases}
    h_++g & (\text{on}\ \widetilde{V}_0^+) \\
    h_1 & (\text{on}\ \widetilde{V}_1) \\
    h_- & (\text{on}\ \widetilde{V}_0^-). 
  \end{cases}
\]
As clearly it holds that $F^*h_-=h_+$ by definition, we have that $F^*(h|_{\widetilde{V}_0^-})=h|_{\widetilde{V}_0^+}+g$. 
Denote by $H$ the function $e^{2\pi\sqrt{-1}h}$. 
Define a new coordinate function $\widehat{S}$ on $\widetilde{V}_0^+\cup \widetilde{V}_1$ by 
$\widehat{S}:=S\cdot H^{-1}$, 
$\widehat{T}$ on $\widetilde{V}_0^-\cup \widetilde{V}_1$ by 
$\widehat{T}:=T\cdot H$, 
$\widehat{\xi}_0$ on a neighborhood of $D_0$ by 
$\widehat{\xi}_0:=\widetilde{w}\cdot \widehat{S}^{-1}=\xi_0\cdot H$, 
and $\widehat{\xi}_\infty$ on a neighborhood of $D_\infty$ by 
$\widehat{\xi}_\infty:=\widetilde{w}\cdot \widehat{T}^{-1}=\xi_\infty\cdot H^{-1}$. 
Then, as it follows $F^*(H|_{\widetilde{V}_0^-})=H|_{\widetilde{V}_0^+}\cdot G$ by the construction, 
we have that 
\[
F^*\widehat{T}=(F^*T)\cdot (F^*H)=
\frac{t\cdot \xi_0}{G}\cdot (H\cdot G)
=t\cdot (\xi_0 H)
=t\cdot \widehat{\xi}_0
\]
and 
\[
F^*\widehat{\xi}_\infty
=(F^*\xi_\infty)\cdot (F^*H)^{-1}
=(G\cdot S)\cdot (H\cdot G)^{-1}
=S\cdot H^{-1}
=\widehat{S}
\]
on $F^{-1}(\widetilde{V}^-_0\cap \widetilde{V}_1)$. 
Therefore, by replacing $(S, \xi_0)$ and $(T, \xi_\infty)$ with 
$(\widehat{S}, \widehat{\xi}_0)$ and $(\widehat{T}, \widehat{\xi}_\infty)$ respectively, we have that 
$F(S, \xi_0)=(t\cdot \xi_0, S)$ holds, which proves the theorem. 
\qed

\subsection{Proof of Theorem \ref{thm:arnold_for_nodal_rational} when $C$ is a cycle of multiple rational curves}\label{section:prf_general}

Here we use the notation in Remark \ref{rmk:V_tilde_general}. 

First, we show that we may assumes that $G_{\nu+1, \nu}\equiv 1$ holds for $\nu=1, 2, n-2$ by changing the coordinates appropriately. 
Let $\{(\widetilde{V}_{(\nu)}', \widetilde{C}_{(\nu)}', (\widetilde{V}_{0 (\nu)}')^\pm, S_{(\nu)}', T_{(\nu)}', \xi_{0 (\nu)}', \xi_{\infty (\nu)}')\}_{\nu=1}^n$ be the $n$-copies of 
$(\widetilde{V}, \widetilde{C}, \widetilde{V}_{0}^\pm, S, T, \xi_{0}, \xi_{\infty})$ in Example \ref{ex:standard_model}. 
Denote by $i'\colon\coprod_{\nu=1}^n\widetilde{V}_{(n)}'\to \widetilde{V}'$ the quotient by the relation generated by the maps $(\widetilde{V}_{0 (\nu+1)}')^+\to (\widetilde{V}_{0 (\nu)}')^-$ naturally induced by $\widetilde{F}_{\nu+1, \nu}$'s for 
$\nu=1, 2, \dots, n-1$. 
In what follows, we regard $\widetilde{V}_{(\nu)}'$ as a subset of $\widetilde{V}'$. 
Note that $\widetilde{V}_{(1)}'\cap \widetilde{V}_{(n)}'=\emptyset$ holds as subset of $\widetilde{V}'$. 
Then it follows from a simple observation that the quotient $\widetilde{C}'$ of $\coprod_{\nu=1}^n\widetilde{C}_{(n)}'$ is a tree of rational curves with intersection matrix 
\[
\left(
    \begin{array}{ccccccc}
      -2 & 1 & 0 &\ldots & 0 & 0 & 0 \\
      1 &  -2& 1 &\ldots & 0  & 0 & 0 \\
      0 & 1 & -2 & \ddots& & \vdots& \vdots \\
      0 &0 &  \ddots&\ddots &\ddots & \vdots& \vdots \\
      \vdots & \vdots & & \ddots& -2& 1& 0 \\
      0 &0 & 0 &\ldots & 1& -2& 1 \\
      0 &0 & 0 & \ldots & 0 & 1 & -2
    \end{array}
  \right). 
\]
As this matrix is negative definite, it follows from \cite[Theorem 4.9]{L} and Grauert's theorem \cite{G} that 
$\widetilde{C}'$ admits a strictly pseudoconvex neighborhood $\widetilde{V}'$ whose maximal compact analytic subset is $\widetilde{C}'$. 
Note that, by the arguments as in \S \ref{prelim:general}, it holds that $H^1(\widetilde{C}', N_{\widetilde{C}'/\widetilde{V}'}^{-m})=0$ holds for each $m\geq 0$. 
Thus, it follows the same argument as in the proof of \cite[Proposition 3.1]{K2} that the restriction $H^1(\widetilde{V}', \mathcal{O}_{\widetilde{V}'})\to H^1(\widetilde{C}', \mathcal{O}_{\widetilde{C}'})$ is injective. 
As $H^1(\widetilde{C}', \mathcal{O}_{\widetilde{C}'})=0$, 
it follows from the same arguments as in the previous subsection that we may assume $G_{\nu+1, \nu}\equiv 1$ for $\nu=1, 2, n-2$. 

Therefore, the problem is reduced to showing that we may assume $G_{1, n}\equiv 1$ by changing the coordinates. 
By replacing $\xi_{0(0)}$ with $G_{1, n}(0, 0)^{-1/n}\cdot \xi_{0(0)}$, we may assume that $G_{1, n}(0, 0)=1$. 
Then the theorem follows by the same argument as in the previous subsection. 
\qed

\section{Toward the gluing construction of K3 surfaces corresponding to degenerations of K3 surfaces of type III}

Let 
$(C, S)$ be the example as in Example \ref{ex:ABU}, \ref{ex:ABU_2} or \ref{ex:ABU_3}. 
Assume that the normal bundles $N_{C/S}$ is a ${\rm U}(1)$-flat line bundles which satisfies Diophantine condition. 
Then it follows from Theorem \ref{thm:arnold_for_nodal_rational} that one can take a neighborhood $V$ of $C$ in $S$ 
as in Example \ref{ex:standard_model} or Example \ref{ex:standard_model_general} with $n=2$ or $3$. 
Define a function $\Phi\colon V\to\mathbb{R}$ by 
$i^*\Phi=|\widetilde{w}|$ when $C$ is a rational curve with a node, and 
by $(i^*\Phi)|_{\widetilde{V}_{(\nu)}}=|S_{(\nu)}\cdot \xi_{0 (\nu)}|=|T_{(\nu)}\cdot \xi_{\infty (\nu)}|$ 
when $C$ consists of two or three irreducible components. 

Fix positive constants $\delta$ and $R$ such that $R>1$ and $\delta<<1$. 
By the same argments as in \cite{KK3} we may assume 
$W:=\{p\in V\mid \Phi(p)<\delta R\}$ 
are relatively compact subsets of $V$ 
by shrinking $V$ 
and changing the scaling of the coordinates. 
Denote by 
$W^*$ the subset $\{p\in V\mid \delta/R<\Phi(p)<\delta R\}$ of $W$ 
and set 
$\widetilde{W}:=i^{-1}(W)$ and 
$\widetilde{W}^*:=i^{-1}(W^*)$, 
where $i\colon \widetilde{V}\to V$ 
is as in Example \ref{ex:standard_model} or \ref{ex:standard_model_general}. 
The set $W^*$ is a subset of 
$M:=S\setminus \{p\in V\mid \Phi(p)\leq \delta/R\}$. 

Define a meromorphic $2$-form $\eta_{\widetilde{W}}$ on 
$\widetilde{W}$ by 
\[
\eta_{\widetilde{W}}:=\frac{dS\wedge d\xi_0}{S\cdot \xi_0}=-\frac{dT\wedge d\xi_\infty}{T\cdot \xi_\infty}
\]
when $n=1$, and by 
\[
\eta_{\widetilde{W}}|_{\widetilde{W}\cap \widetilde{V}_{(\nu)}}:=\frac{dS_{(\nu)}\wedge d\xi_{0 (\nu)}}{S_{(\nu)}\cdot \xi_{0 (\nu)}}=-\frac{dT_{(\nu)}\wedge d\xi_{\infty (\nu)}}{T_{(\nu)}\cdot \xi_{\infty (\nu)}}
\]
for each $\nu$ when $n\geq 2$. 
As it holds that 
\[
F^*\eta_{\widetilde{W}}=-F^*\frac{dT}{T}\wedge F^*\frac{d\xi_\infty}{\xi_\infty}
=-\frac{d(t\xi_0)}{t\xi_0}\wedge \frac{dS}{S}
=\frac{dS\wedge d\xi_0}{S\cdot \xi_0}=\eta_{\widetilde{W}}
\]
when $n=1$ and 
\[
(F_{\nu+1, \nu})^*\eta_{\widetilde{W}}=-(F_{\nu+1, \nu})^*\frac{dT_{(\nu)}}{T_{(\nu)}}\wedge (F_{\nu+1, \nu})^*\frac{d\xi_{\infty (\nu)}}{\xi_{\infty (\nu)}}
=-\frac{d(t\xi_{0 (\nu)})}{t\xi_{0 (\nu)}}\wedge \frac{dS_{(\nu)}}{S_{(\nu)}}
=\frac{dS_{(\nu)}\wedge d\xi_{0 (\nu)}}{S_{(\nu)}\cdot \xi_{0 (\nu)}}=\eta_{\widetilde{W}}
\]
when $n\geq 2$, 
it follows that there exists a meromorphic $2$-form $\eta_W$ on $W$ with $i^*\eta_W=\eta_{\widetilde{W}}$ in both the cases. 
Now we have the following: 

\begin{proposition}\label{prop:mero1form}
$S$ admits a meromorphic $2$-form $\eta$ which has no zero and has poles only along $C$ such that 
$\eta|_{W}=\eta_W$ holds. 
\end{proposition}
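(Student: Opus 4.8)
The plan is to exploit that, in each of Examples \ref{ex:ABU}, \ref{ex:ABU_2}, and \ref{ex:ABU_3}, the surface $S$ is a blow-up $\pi\colon S\to\mathbb{P}^2$ at nine points lying on the cubic $C_0$ and $C$ is the strict transform $(\pi^{-1})_*C_0$. Since the nine points lie on $C_0$ with multiplicity one, the class of $C$ equals $3H-\sum_{\nu=1}^9 E_\nu=-K_S$, so $C$ is an anticanonical divisor: $\mathcal{O}_S(C)\cong K_S^{-1}$, equivalently $K_S\cong\mathcal{O}_S(-C)$. Consequently $K_S\otimes\mathcal{O}_S(C)$ is holomorphically trivial, and since $H^0(S,K_S)=H^0(\mathbb{P}^2,\mathcal{O}(-3))=0$ we get $H^0(S,K_S\otimes\mathcal{O}_S(C))\cong H^0(S,\mathcal{O}_S)\cong\mathbb{C}$. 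First I would fix a generator, i.e.\ a global meromorphic $2$-form $\eta_0$ on $S$ whose divisor is exactly $-C$ (no zeros, simple poles along $C$); such an $\eta_0$ is unique up to a multiplicative constant.

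Next I would compare $\eta_0$ with $\eta_W$ on $W$. Both are nowhere-vanishing holomorphic sections of $K_S\otimes\mathcal{O}_S(C)$ over $W$ (the logarithmic poles of $\eta_W$ along $C$ are simple, by the local expression $\eta_{\widetilde W}=dS\wedge d\xi_0/(S\cdot\xi_0)$), so the ratio $g:=\eta_W/(\eta_0|_W)$ is a nowhere-vanishing holomorphic function on $W$. Once I know $g$ is a constant $c$, the $2$-form $\eta:=c\,\eta_0$ on $S$ has divisor $-C$ and satisfies $\eta|_W=\eta_W$, which is precisely the assertion. Thus the whole proof reduces to showing that $g$ is constant.

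As a first step toward constancy, restrict $g$ to $C$: since $H^0(C,\mathcal{O}_C)\cong\mathbb{C}$ (see \S\ref{section:Pic} and \S\ref{prelim:general}), the nowhere-vanishing $g|_C$ is a constant, which after rescaling $\eta_0$ I may assume equals $1$. Equivalently, the Poincar\'e residues of $\eta_W$ and of $\eta_0$ along $C$ now agree, so that $\beta:=\eta_0|_W-\eta_W$ is a genuine holomorphic $2$-form on $W$ vanishing along $C$. It remains to prove $\beta\equiv 0$, i.e.\ $g\equiv 1$.

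The hard part is this last step, and it is exactly where the Diophantine condition must enter: a bounded holomorphic function on $W$ that is constant on $C$ need not be constant in general (for instance $\widetilde w$ vanishes on $C$), so the constancy of $g$ is a genuine rigidity statement for the neighborhood $W$. The plan is to pull $g$ back through the covering $i\colon\widetilde W\to W$ of \S\ref{section:V_tilde} and Remark \ref{rmk:V_tilde_general}, obtaining a holomorphic function $\widetilde g$ on a neighborhood of the zero section $\widetilde C\subset\mathcal{O}_{\mathbb{P}^1}(-2)$ that is invariant under the gluing identifications defining $i$ and satisfies $\widetilde g|_{\widetilde C}\equiv 1$, and then to expand it in powers of $\widetilde w$, say $\widetilde g=1+\sum_{\nu\ge 1}a_\nu\cdot\widetilde w^{\nu}$ with coefficient functions $a_\nu$ along $\widetilde C$, exactly as in Step 1 of the proof of Proposition \ref{prop:H^1}. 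The invariance under the gluing forces term-by-term functional equations on the $a_\nu$ involving the factors $1-t^{\nu}$; because $t=e^{2\pi\sqrt{-1}\theta}$ with $\theta$ Diophantine, the same dynamical estimates used in \S 3 (Lemma \ref{lem:ueda_type_estim} together with the majorant-series argument of \cite{S}) should show that every coefficient $a_\nu$ vanishes, whence $\widetilde g\equiv 1$ and so $g\equiv 1$. Geometrically this expresses that the level sets $\{\Phi=\mathrm{const}\}$ carry a minimal foliation with recurrent leaves, forcing a bounded holomorphic function to descend to a constant first integral; it is the analogue for cycles of rational curves of the semiflatness of the global holomorphic $2$-form established in the elliptic case in \cite{KK3}, and I expect it to be the main obstacle of the proof.
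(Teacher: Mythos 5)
Your proposal is correct, and its first three steps are exactly the paper's: the proof of Proposition \ref{prop:mero1form} is, as the text says, the argument of \cite[Proposition 3.1]{KK3} --- one takes the anticanonical form $\eta_0$ with ${\rm div}(\eta_0)=-C$ (your computation $C=3H-\sum E_\nu=-K_S$ is what makes Examples \ref{ex:ABU}, \ref{ex:ABU_2}, \ref{ex:ABU_3} work) and must show the nowhere-vanishing ratio $g=\eta_W/\eta_0|_W$ is constant. Where you diverge is the rigidity step. The paper deduces constancy from $H^0(W,\mathcal{O}_W)\cong\mathbb{C}$, proved as in \cite[Lemma 3.2]{KK3} by the maximum principle along the leaves of the compact Levi-flat hypersurfaces $\{\widetilde{w}=\text{const}\}$, which are dense because $\theta$ is irrational; this kills \emph{every} holomorphic function on $W$ at once, with no prior normalization along $C$. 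Your alternative --- expand $i^*(g-1)$ in powers of $\widetilde{w}$ and kill the coefficients term by term --- also works, but for a softer reason than the one you give: since $g$ is a single function (not a cocycle to be solved), the coefficient equations are \emph{homogeneous}, and the lowest nonvanishing coefficient would define a nonzero element of $H^0(C,N_{C/S}^{-\nu})$, which vanishes for every $\nu\geq 1$ because $N_{C/S}$ is non-torsion (\S\ref{section:Pic}); equivalently, Lemma \ref{lem:ueda_type_estim} with $b_\pm\equiv 0$ forces $a_j\equiv 0$. Since each coefficient is \emph{exactly} zero there is no series to sum, so the majorant argument of \cite{S} and the Diophantine quality of $\theta$ play no role at this point --- contrary to your closing expectation, this is precisely where the Diophantine condition does \emph{not} enter. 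It was spent earlier, in Theorem \ref{thm:arnold_for_nodal_rational}, to produce the standard-model neighborhood $W$ and the coordinates in which $\eta_W$ is even defined; for Proposition \ref{prop:mero1form} itself, irrationality (non-torsion normal bundle, dense leaves) suffices. With all $a_\nu=0$, $g-1$ vanishes to infinite order along $C$, hence identically near $C$ by the Krull intersection theorem, hence on the connected $W$; what your route buys is an elementary jet-theoretic argument avoiding the foliation dynamics, while the paper's route buys the stronger statement $H^0(W,\mathcal{O}_W)\cong\mathbb{C}$, which is reused in \S 5.
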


Proposition \ref{prop:mero1form} is shown by the same argument as in the proof of \cite[Proposition 3.1]{KK3}. 
Here we use the fact that any leaf of a compact Levi-flat hypersurface of $W^*$ defined by $\{\widetilde{w}=\text{constant}\}$ is dense 
(Therefore, it follows that $H^0(W, \mathcal{O}_{W})\cong \mathbb{C}$ by the same arguments as in the proof of \cite[Lemma 3.2]{KK3}). 

\begin{proposition}\label{prop:K3_pi1}
It holds that $H_1(M, \mathbb{C})=0$. 
\end{proposition}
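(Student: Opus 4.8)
The plan is to compute $H_1(M,\mathbb{C})$ topologically, by relating $M$ to the complement $S\setminus C$ and then reducing everything to the intersection theory of the rational surface $S$. First I would observe that the inclusion $M\hookrightarrow S\setminus C$ is a homotopy equivalence: the sublevel set $\{p\in V\mid \Phi(p)\le \delta/R\}$ is a compact regular neighborhood of $C$ (its structure as a tube is explicit from the Levi-flat level sets of $\Phi$ in the standard model), and $\{0<\Phi\le\delta/R\}$ is a collar, so pushing outward along the level sets of $\Phi$ identifies the complement of the closed tube with $S\setminus C$ up to homotopy. Using that $S$ is a blow-up of $\mathbb{P}^2$, hence simply connected with $H_1(S,\mathbb{C})=0$, the long exact sequence of the pair $(S,S\setminus C)$ with complex coefficients reads
\[
H_2(S,\mathbb{C}) \xrightarrow{\ \phi\ } H_2(S,S\setminus C;\mathbb{C}) \longrightarrow H_1(S\setminus C;\mathbb{C}) \longrightarrow H_1(S,\mathbb{C})=0,
\]
so that $H_1(M,\mathbb{C})\cong\operatorname{coker}\phi$.

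Next I would identify the relative group and the map $\phi$. Excising to a compact regular neighborhood $\overline{U}$ of $C$ and applying Lefschetz duality for the oriented $4$-manifold-with-boundary $\overline{U}$ gives $H_2(S,S\setminus C;\mathbb{C})\cong H_2(\overline{U},\partial\overline{U};\mathbb{C})\cong H^2(\overline{U};\mathbb{C})\cong H^2(C,\mathbb{C})$. Since the nodes of $C$ contribute only to $H^1$, one has $H^2(C,\mathbb{C})\cong\mathbb{C}^n$ with basis dual to the fundamental classes $[C_{(1)}],\dots,[C_{(n)}]$ of the irreducible components. Under the Poincar\'e duality isomorphism $H_2(S,\mathbb{C})\cong H^2(S,\mathbb{C})$, the map $\phi$ becomes the restriction $i^*\colon H^2(S,\mathbb{C})\to H^2(C,\mathbb{C})$; evaluating $i^*(\mathrm{PD}(\alpha))$ on $[C_{(\nu)}]$ yields the intersection number in $S$, so that $\phi(\alpha)=(\alpha\cdot C_{(\nu)})_{\nu=1}^n\in\mathbb{C}^n$.

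Since the intersection form on the closed oriented $4$-manifold $S$ is nondegenerate, a functional $(c_\nu)\mapsto\sum_\nu c_\nu x_\nu$ on $\mathbb{C}^n$ annihilates the image of $\phi$ precisely when $\sum_\nu c_\nu[C_{(\nu)}]=0$ in $H_2(S,\mathbb{C})$; hence $\operatorname{coker}\phi=0$ if and only if $[C_{(1)}],\dots,[C_{(n)}]$ are linearly independent. It then remains to check this in each example, using that $H_2(S,\mathbb{C})$ is spanned by the hyperplane class $H$ and the exceptional classes $E_1,\dots,E_9$. In Example \ref{ex:ABU} the single class is $[C]=3H-\sum_{i=1}^9 E_i\ne 0$; in Example \ref{ex:ABU_2} the classes are $H-E_1-E_2-E_3$ and $2H-E_4-\cdots-E_9$; in Example \ref{ex:ABU_3} they are $H-E_1-E_2-E_3$, $H-E_4-E_5-E_6$ and $H-E_7-E_8-E_9$. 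In all cases the disjoint supports in the exceptional classes (and distinct $H$-coefficients) make them manifestly independent, whence $H_1(M,\mathbb{C})\cong\operatorname{coker}\phi=0$.

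The step I expect to be most delicate is the precise identification of $\phi$ with the intersection map: both the excision and Lefschetz-duality isomorphism $H_2(S,S\setminus C)\cong H^2(C)$ and the compatibility of the connecting map with restriction must be set up carefully because $C$ is singular, so its neighborhood is a regular neighborhood rather than a genuine disk bundle. Establishing rigorously that $\{\Phi\le\delta/R\}$ is a regular neighborhood of the nodal $C$ (so that $M\simeq S\setminus C$) is the other point requiring care, though it follows from the explicit coordinates and the Levi-flat structure of $\Phi$ coming from Example \ref{ex:standard_model} and Example \ref{ex:standard_model_general}.
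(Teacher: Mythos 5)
Your proof is correct, but it takes a genuinely different route from the paper. The paper covers $S$ by $\{W, M\}$ with $W\cap M=W^*$ and runs the Mayer--Vietoris sequence $H_2(S,\mathbb{C})\to H_1(W^*,\mathbb{C})\to H_1(W,\mathbb{C})\oplus H_1(M,\mathbb{C})\to H_1(S,\mathbb{C})=0$; the key input is Lemma \ref{lem:H}, which identifies the Levi-flat level set $H_r\simeq W^*$ with an explicit $T^2$-bundle over ${\rm U}(1)$, giving $H_1(W^*,\mathbb{C})\cong\mathbb{C}^2$, after which a dimension count (using $H_1(W,\mathbb{C})\cong\mathbb{C}$, since $W$ retracts to the cycle $C$, and the observation that the image of $H_2(S,\mathbb{C})\to H_1(W^*,\mathbb{C})$ is one-dimensional) forces $H_1(M,\mathbb{C})=0$. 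You instead retract $M$ onto $S\setminus C$ along the level sets of $\Phi$, and compute $H_1(S\setminus C;\mathbb{C})$ as the cokernel of the intersection map $H_2(S,\mathbb{C})\to H^2(C,\mathbb{C})\cong\mathbb{C}^n$ via Alexander--Lefschetz duality, reducing everything to the linear independence of $[C_{(1)}],\dots,[C_{(n)}]$ in $H_2(S,\mathbb{C})$, which you verify in Examples \ref{ex:ABU}, \ref{ex:ABU_2}, \ref{ex:ABU_3}. Both arguments are sound, and the technical points you flag (duality over the singular curve $C$, and that $\{\Phi\le\delta/R\}$ is a regular neighborhood) are standard: \v{C}ech--Alexander duality $H_k(S,S\setminus C;\mathbb{C})\cong\check{H}^{4-k}(C;\mathbb{C})$ needs no disk-bundle structure, and $\Phi$ has no critical points on $\{0<\Phi<\delta R\}$ since locally $\Phi=|w|$ for a submersive holomorphic $w$ away from the nodes. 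The trade-off: the paper's route reuses Lemma \ref{lem:H}, which it needs anyway for the later observation that $H_r$ is not homeomorphic to $T^3$, whereas your route avoids computing $H_1(H_r)$ entirely, makes the role of the nine-point configuration explicit, and isolates the precise criterion (independence of the component classes, which also subsumes the paper's ``image $\cong\mathbb{C}$'' claim via nondegeneracy of the intersection form) under which the conclusion holds --- so it generalizes immediately beyond these three examples.
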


\begin{proof}
Take a real number $r$ with $\delta /R<r<\delta R$. 
As it is clear that $W^*$ is homotopic to $H_r:=\Phi^{-1}(r)$, 
it follows from Lemma \ref{lem:H} below that $H_1(W^*, \mathbb{C})\cong \mathbb{C}^2$. 
By Mayer--Vietoris sequence corresponds to the open covering $\{W, M\}$ of $S$, we obtain an exact sequence 
\[
H_2(S, \mathbb{C})\to H_1(W^*, \mathbb{C})\to H_1(W, \mathbb{C})\oplus H_1(M, \mathbb{C})\to H_1(S, \mathbb{C}). 
\]
As it is easily observed that the image of the map $H_2(S, \mathbb{C})\to H_1(W^*, \mathbb{C})$ is isomorphic to $\mathbb{C}$, we have that $H_1(M, \mathbb{C})=0$ (Note that $H_1(W, \mathbb{C})\cong \mathbb{C}$ and $H_1(S, \mathbb{C})=0$). 
\end{proof}

\begin{lemma}\label{lem:H}
The  Levi-flat manifold $H_r:=\Phi^{-1}(r)$ is $C^\omega$-diffeomorphic to 
$(\mathbb{C}^*\times{\rm U}(1))/\sim_{r, n}$ for sufficiently small $r$, 
where $\sim_{r, n}$ is the relation generated by 
\[
(\eta, \lambda)\sim_{r, n} (r^n\cdot \lambda^n\cdot \eta,\ t(N_{C/S})\cdot \lambda)
\]
for $(\eta, \lambda)\in \mathbb{C}^*\times{\rm U}(1)$. 
\end{lemma}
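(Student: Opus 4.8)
The plan is to reduce to the standard model and then write down an explicit real-analytic parametrization of $H_r$ by $\mathbb{C}^*\times {\rm U}(1)$ which intertwines the gluing maps of the covering $i\colon\widetilde V\to V$ with the relation $\sim_{r,n}$. By Theorem \ref{thm:arnold_for_nodal_rational} we may assume $(C,V)$ is the standard model of Example \ref{ex:standard_model} (for $n=1$) or Example \ref{ex:standard_model_general} (for $n\ge 2$), with the covering $i\colon\widetilde V\to V$ and coordinates $(S,\xi_0)$, $(T,\xi_\infty)$, $T=S^{-1}$, $\xi_\infty=\xi_0 S^2$. Since each gluing map multiplies the defining function $\widetilde w=S\xi_0=T\xi_\infty$ by a constant in ${\rm U}(1)$, the modulus $\Phi=|\widetilde w|$ descends to $V$, so $H_r$ is the image under $i$ of $\widetilde H_r:=\{|\widetilde w|=r\}$.

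First I treat $n=1$. For small $r$ the leaf $\{\widetilde w=c\}$ ($|c|=r$) meets $\widetilde V$ in the annulus $\{r/\varepsilon<|S|<\varepsilon/r\}$, and letting $c$ vary over the circle $|c|=r\cong {\rm U}(1)$ gives a real-analytic identification of $\widetilde H_r$ with $A\times {\rm U}(1)$, where $A=\{r/\varepsilon<|S|<\varepsilon/r\}$ and the second factor records $\lambda:=\widetilde w/r$. A direct computation from $F^*(T,\xi_\infty)=(t\xi_0,S)$ shows that, along $\widetilde H_r$, the gluing $F$ sends $(S,\lambda)$ to $(S/(tr\lambda),\,t\lambda)$. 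Setting $\eta:=1/(S\lambda)$ --- a real-analytic, invertible change of the first coordinate, non-holomorphic but harmless since we only seek a $C^\omega$-diffeomorphism --- turns this into $(\eta,\lambda)\mapsto(r\lambda\eta,\,t\lambda)$, i.e. exactly $\sim_{r,1}$. Because $A$ has modulus between one and two fundamental domains of the scaling $\eta\mapsto r\lambda\eta$ when $r$ is small, the single identification $F$ realizes the full quotient, whence $H_r=\widetilde H_r/F\cong(\mathbb{C}^*\times {\rm U}(1))/\sim_{r,1}$.

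For $n\ge 2$ I would run the same computation around the cycle. Using the coordinates $(S_{(\nu)},\xi_{0(\nu)})$ on each $\widetilde V_{(\nu)}$ and the gluings $F_{\nu+1,\nu}$ of Remark \ref{rmk:V_tilde_general}, passing to the universal covering unwinds the cycle into an infinite chain on which $\widetilde H_r$ becomes $\mathbb{C}^*\times {\rm U}(1)$; the generator of the deck group $\pi_1(V)\cong\mathbb{Z}$ is the composition of the $n$ gluing maps around one period. Iterating the $n=1$ computation $n$ times accumulates the scaling factor $r^n\lambda^n$ and the total rotation $\prod_{\nu=0}^{n-1}t_{\nu+1,\nu}=t(N_{C/S})$; after the same real-analytic reparametrization of the fibre coordinate the deck action becomes precisely $\sim_{r,n}$, giving $H_r\cong(\mathbb{C}^*\times {\rm U}(1))/\sim_{r,n}$.

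It remains to check that the stated quotient is a manifold and that the constructed map is a genuine diffeomorphism: the $\mathbb{Z}$-action generated by $(\eta,\lambda)\mapsto(r^n\lambda^n\eta,\,t\lambda)$ is free because $t(N_{C/S})$ is non-torsion, and properly discontinuous because it scales $|\eta|$ by the fixed factor $r^n<1$; hence the quotient is a $C^\omega$ three-manifold and the parametrization descends to a $C^\omega$-diffeomorphism. The main obstacle is the bookkeeping that produces the precise relation $\sim_{r,n}$: tracking how the powers $r^n,\lambda^n$ and the unit $t(N_{C/S})$ arise from composing the gluings, arranging via the non-holomorphic substitution $\eta\mapsto\eta\cdot(\text{a unit-valued function of }\lambda)$ that the accumulated unit factors are absorbed, and verifying that smallness of $r$ leaves exactly one nontrivial identification per fundamental annulus so that no spurious extra identifications occur.
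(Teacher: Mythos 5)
Your proposal is correct and follows essentially the same route as the paper: the paper also passes to the (universal) covering built from copies of the standard model and writes down the explicit real-analytic parametrization $(\widehat{g}_\nu)^*(S_{(\nu)}, \xi_{0(\nu)})=\bigl(r^\nu\lambda^\nu\eta,\ 1/(r^{\nu-1}\lambda^{\nu-1}\eta)\bigr)$, which is exactly your substitution $\eta=1/(S\lambda)$ propagated along the chain, with the deck action realizing $\sim_{r,n}$. Your extra bookkeeping (absorbing the accumulated unit constants and checking the annulus covers one fundamental domain of the scaling) is sound and matches what the paper leaves as a ``simple argument.''
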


\begin{proof}
Let $\{(\widehat{V}_{(\nu)}, \widehat{C}_{(\nu)}, \widehat{V}_{0 (\nu)}^\pm, S_{(\nu)}, T_{(\nu)}, \xi_{0 (\nu)}, \xi_{\infty (\nu)})\}_{\nu=-\infty}^\infty$ be copies of 
$(\widetilde{V}, \widetilde{C}$, $\widetilde{V}_{0}^\pm$, $S$, $T$, $\xi_{0}$, $\xi_{\infty})$ in Example \ref{ex:standard_model}. 
Define a biholomorphism $F_{\nu+1, \nu}\colon \widehat{V}_{0 (\nu+1)}^+\to \widehat{V}_{0 (\nu)}^-$ by 
$(F_{\nu+1, \nu})^*(T_{(\nu)}, \xi_{\infty (\nu)})=(\xi_{0 (\nu)}, S_{(\nu)})$ 
for each $\nu$. 
Define $\widehat{V}$ by gluing $\widehat{V}_{(\nu)}$'s by $F_{\nu+1, \nu}$'s. 
Note that there is the natural covering map $\widehat{V}\to V$, which can be regarded as the universal covering. 

Consider the map $\widehat{g}_\nu\colon\{(\eta, \lambda)\in \mathbb{C}^*\times {\rm U}(1)\mid 2r^{-\nu+1}<|\eta|<2r^{-\nu-1}\}\to \widehat{V}_{(\nu)}$ defined by 
\[
(\widehat{g}_\nu)^*(S_{(\nu)}, \xi_{0 (\nu)})=\left(r^\nu\cdot \lambda^\nu\cdot \eta,\ \frac{1}{r^{\nu-1}\cdot \lambda^{\nu-1}\cdot \eta}\right). 
\]
Then, by a simple argument, it follows that $\widehat{g}_\nu$'s glue together to define an embedding $\widehat{g}\colon \mathbb{C}^*\times {\rm U}(1)\to \widehat{V}$. 
As it follows that $\widehat{g}$ induces an embedding $g\colon (\mathbb{C}^*\times{\rm U}(1))/\sim_{r, n}\to V$ and the image clearly coincides with $H_r$, the lemma follows. 
\end{proof}

Note that, by Lemma \ref{lem:H}, $H_r$ is diffeomorphic to $T^2_{g_n}$, which is the fiber bundle over ${\rm U}(1)$ whose fiber is $T^2:={\rm U}(1)\times {\rm U}(1)$ and the monodromy is 
$g_n\colon T^2\ni (p, q)\mapsto (pq^n, q)\in T^2$. 
From this, one can have that $H_1(H_r, \mathbb{Z})\cong\mathbb{Z}\oplus \mathbb{Z}\oplus (\mathbb{Z}/n\mathbb{Z})$.  
We emphasize that $H_r$ is not homeomorphic to $T^3:={\rm U}(1)\times {\rm U}(1)\times {\rm U}(1)$, since $H_1(T^3, \mathbb{Z})\cong \mathbb{Z}^{\oplus 3}$. 

By Proposition \ref{prop:mero1form} and \ref{prop:K3_pi1}, it seems to be natural to expect that one can construct a K3 surface by holomorphically gluing such models $\{(M_\nu, W^*_\nu, \eta_\nu|_{M_\nu})\}_\nu$ as obtained by the same construction as $(M, W^*, \eta|_M)$ 
(cf. \cite{KK3}). 

\begin{question}\label{q:K3}
Does there exist a non-singular K3 surface $X$ with holomorphic $2$-form $\sigma$ which admits an open covering $X=\bigcup_\nu M_\nu$ such that $\sigma|_{M_\nu}=\eta_\nu|_{M_\nu}$ and $M_\nu\cap M_\mu\subset W^*_\nu$ for each $\nu\not=\mu$, where $(M_\nu, W^*_\nu, \eta_\nu|_{M_\nu})$'s are as above? 
\qed
\end{question}

By considering the limit as the tab for gluing $\bigcup_\nu W^*_\nu$ goes to the set of zero measure (i.e. as $R\to 1$ and $\delta\to 0$), the K3 surfaces $X$ should degenerate to a singular K3 surface which is the union of rational surfaces and whose singular part is the union of a cycle of rational curves. 
We remark that the affirmative answer to Question \ref{q:K3} implies the existence of a K3 surface which includes a Levi-flat hypersurface which is diffeomorphic to $T^2_{g_n}$ (and thus is not homeomorphic to $T^3$). 



\end{document}